\numberwithin{equation}{section}
\theoremstyle{definition}
\newtheorem{definition}{Definition}
\theoremstyle{remark}
\newtheorem{rem}{Remark}
\theoremstyle{plain}
\newtheorem{assumption}{Assumption}
\newtheorem{theorem}{Theorem}
\newtheorem{lemma}{Lemma}
\newcommand{\lfrac}[2]{\genfrac{}{}{0pt}{}{#1}{#2}}
\begin{document}

\begin{frontmatter}

% "Title of the paper"
\title{On rate of convergence in non-central limit theorems}
\runtitle{Rate  of convergence to Hermite-type distribution}
% indicate corresponding author with \corref{}
% \author{\fnms{John} \snm{Smith}\corref{}\ead[label=e1]{smith@foo.com}\thanksref{t1}}
% \thankstext{t1}{Thanks to somebody} 
% \address{line 1\\ line 2\\ printead{e1}}
% \affiliation{Some University}
\begin{aug}
\author{\fnms{Vo} \snm{Anh}\thanksref[1]{t1}\ead[label=e1]{v.anh@qut.edu.au}},
\author{\fnms{Nikolai} \snm{Leonenko}\thanksref[1,2]{t1,t2}\ead[label=e2]{LeonenkoN@cardiff.ac.uk}},
\author{\fnms{Andriy} \snm{Olenko}\corref{}\thanksref[1,3]{t1,t3}\ead[label=e3]{a.olenko@latrobe.edu.au}}
\and
\author{\fnms{Volodymyr} \snm{Vaskovych}\ead[label=e4]{vaskovych.v@students.latrobe.edu.au}}

\thankstext[1]{t1}{Supported in part under Australian Research Council's
Discovery Projects funding scheme (project number DP160101366)}
\thankstext[2]{t2}{Supported in part by project MTM2012-32674 (co-funded with FEDER) of the DGI, MINECO,  and under Cardiff Incoming Visiting Fellowship Scheme and International Collaboration Seedcorn Fund}
\thankstext[3]{t3}{Supported in part by the La Trobe University DRP Grant in Mathematical and Computing Sciences}
\runauthor{Anh, Leonenko, Olenko and Vaskovych}

\affiliation{Queensland University of Technology, Cardiff University\\ and La Trobe University}

\address{Vo Anh\\
School of Mathematical Sciences\\
Queensland University of Technology\\
Brisbane, Queensland, 4001\\
Australia\\
\printead{e1}}

\address{N. Leonenko\\
School of Mathematics\\ 
Cardiff University\\ 
Senghennydd Road, Cardiff CF24 4AG\\ 
United Kingdom\\
\printead{e2}}	
		
\address{A. Olenko\\
V. Vaskovych\\
Department of Mathematics and Statistics\\
La Trobe University\\
Melbourne, Victoria, 3086\\
Australia\\
\printead{e3}\\
\phantom{E-mail:\ }\printead*{e4}}

\end{aug}

\begin{abstract}
	The main result of this paper is the rate of convergence to Hermite-type distributions in non-central limit theorems. To the best of our knowledge, this is the first result in the literature on rates of convergence of functionals of random fields to Hermite-type distributions with ranks greater than~2. The results were obtained under rather general assumptions on the spectral densities of random fields. These assumptions are even weaker than in the known convergence results for the case of Rosenblatt distributions.  Additionally, L\'{e}vy concentration functions for Hermite-type distributions were investigated.
\end{abstract}

\begin{keyword}[class=MSC]
\kwd[Primary ]{60G60}
\kwd[; secondary ]{60F05}
\kwd{60G12}
\end{keyword}

\begin{keyword}
\kwd{Rate of convergence}
\kwd{Non-central limit theorems}
\kwd{Random field}
\kwd{Long-range dependence}
\kwd{Hermite-type distribution}
\end{keyword}

\end{frontmatter}

\section{Introduction}

This research will focus on the rate of convergence of local functionals of real-valued homogeneous random fields
 with long-range dependence. Non-linear integral functionals on
bounded sets of $\mathbb{R}^{d}$ are studied. These functionals are
important statistical tools in various fields of application, for
example, image analysis, cosmology, finance, and geology. It was shown in \cite{Dob}, 
\cite{Taq1} and \cite{Taq2} that these functionals can produce non-Gaussian
limits and require normalizing coefficients different from those in central
limit theorems.

Since many modern statistical models are now designed to deal with
non-Gaussian data, non-central limit theory is gaining more and more
popularity. Some novel results using different models and asymptotic
distributions were obtained during the past few years, see \cite{Main}, \cite%
{Bre1}, \cite{marpec}, \cite{pec}, \cite{Taq1} and references therein.
Despite such development of the asymptotic theory, only a few of the studies
obtained the rate of convergence, especially in the non-central case.

There are two popular approaches to investigate the rate of convergence in
the literature: the direct probability approach \cite{Main}, \cite{Leon}, and
the Stein-Malliavin method introduced in \cite{NourPec1}.

As the name suggests, the Stein-Malliavin method combines Malliavin calculus
and Stein's method. The main strength of this approach is that it does not
use any restrictions on the moments of order higher than four (see, for
example, \cite{NourPec1}) and even three in some cases (see \cite{NeufViens}%
). For a more detailed description of the method, the reader is referred to 
\cite{NourPec1}. At this moment, the Stein-Malliavin approach is well
developed for stochastic processes. However,
many problems concerning non-central limit theorems for random fields remain
unsolved. The full list of the already solved problems can be found in \cite%
{NourWeb}.

One of the first papers which obtained the rate of convergence in the
central limit theorem using the Stein-Malliavin approach was \cite{NourPec1}%
. The case of stochastic processes was considered. Further refinement of
these results can be found in \cite{NourPec2}, where optimal Berry-Esseen
bounds for the normal approximation of functionals of Gaussian fields are
shown. However, it is known that numerous functionals do not converge to the
Gaussian distribution. The conditions to obtain the Gaussian asymptotics can
be found in so-called Breuer-Major theorems, see \cite{Arc} and \cite{Surg}.
These results are based on the method of cumulants and diagram formulae.
Using the Stein-Malliavin approach, \cite{NourPec3} derived a version of a
quantitative Breuer-Major theorem that contains a stronger version of the
results in \cite{Arc} and \cite{Surg}. The rate of convergence for
Wasserstein topology was found and an upper bound for the Kolmogorov
distance was given as a relationship between the Kolmogorov and Wasserstein
distances. In \cite{KimPark} the authors directly derived the upper-bound
for the Kolmogorov distance in the same quantitative Breuer-Major theorem as
in \cite{NourPec3} and showed that this bound is better than the known
bounds in the literature, since it converges to zero faster. The results
described above are the most general results currently known concerning the
rate of convergence in the central limit theorem using the Stein-Malliavin
approach.

Related to \cite{NourPec3} is the work \cite{Pham} where,
using the same arguments, the author found the rate of convergence for the
central limit theorem of sojourn times of Gaussian fields. Similar results
for the Kolmogorov distance were obtained in \cite{KimPark}.

Concerning non-central limit theorems, only partial results have been found.
It is known from \cite{Dav},\cite{Surg} and \cite{Taq1} that, depending on
the value of the Hurst parameter, functionals of fractional Brownian motion
can converge either to the standard Gaussian distribution or a Hermite-type
distribution. This idea was used in \cite{Bre1} and \cite{Bre2} to obtain
the first rates of convergence in non-central limit theorems using the
Stein-Malliavin method. Similar to the case of central limit theorems, these
results were obtained for stochastic processes. In \cite{Bre2} fractional
Brownian motion was considered, and rates of convergence for both Gaussian
and Hermite-type asymptotic distributions were given. Furthermore all the
results of \cite{Bre2} were refined in \cite{Bre1} for the case of the
fractional Brownian sheet as an initial random element. It makes \cite{Bre2}
the only known work that uses the Stein-Malliavin method to provide the rate
of convergence of some local functionals of random fields with long-range
dependence.

Separately stands \cite{ArrAz}. This work followed a new approach based only
on Stein's method without Malliavin calculus. The authors worked with
Wasserstein-2 metrics and showed the rate of convergence of quadratic
functionals of i.i.d. Gaussian variables. It is one of the convergence
results which can't be obtained using the regular Stein-Malliavin method 
\cite{ArrAz}. However, we are not aware of extensions of these results to the multi-dimensional and non-Gaussian cases.

The classical probability approach  employs direct probability methods to find the
rate of convergence. Its main advantage over the other methods is that it
directly uses the correlation functions and spectral densities of the
involved random fields. Therefore, asymptotic results can be explicitly
obtained for wide classes of random fields using slowly varying functions.
Using this approach, the first rate of convergence in the central limit
theorem for Gaussian fields was obtained in \cite{Leon}. In the following
years, some other results were obtained, but all of them studied the
convergence to the Gaussian distribution.

As for convergence to non-Gaussian distributions, the only known result
using the classical probability approach is \cite{Main}. For functionals of
Hermite rank-2 polynomials of long-range dependent Gaussian fields, it
investigated the rate of convergence in the Kolmogorov metric of these
functionals to the Rosenblatt-type distribution. In this paper, we
generalize these results to some classes of Hermite-type distributions. It
is worth mentioning that our present results are obtained under more natural
and much weaker assumptions on the spectral densities than those in \cite%
{Main}. These quite general assumptions allow to consider various new
asymptotic scenarios even for the Rosenblatt-type case in \cite{Main}.

It's also worth mentioning that in the known Stein-Malliavin results, the rate of convergence was obtained only for a leading term or a fixed number of chaoses in the Wiener chaos expansion. However, while other expansion terms in higher level Wiener chaoses do not change the asymptotic distribution, they can substantially contribute to the rate of convergence. The method proposed in this manuscript takes into account all terms in the Wiener chaos expansion to derive rates of convergence.

It is well known, see \cite{Dav, NourNu, Shi}, that the probability distributions of
Hermite-type random variables are absolutely continuous. In this paper we 
investigate some fine properties of these distributions required to derive rates of convergence. Specifically, we discuss the cases of bounded probability 
density functions of Hermite-type random variables. Using the 
method proposed in \cite{NourPol}, we derive the anti-concentration inequality that 
can be applied to estimate the L\'{e}vy concentration function of Hermite-type random 
variables.

The article is organized as follows. In Section \ref{sec2} we recall some
basic definitions and formulae of the spectral theory of random fields. The
main assumptions and auxiliary results are stated in Section \ref{sec3}. In
Section \ref{sec4} we discuss some fine properties of Hermite-type
distributions. Section \ref{sec5} provides the results concerning the rate
of convergence. Discussions and conclusions are presented in Section \ref{sec6}.

\section{Notations}

\label{sec2}

In what follows $\left| \cdot \right| $ and $\left\| \cdot \right\| $ denote
the Lebesgue measure and the Euclidean distance in~$\mathbb{R}^{d},$
respectively. We use the symbols $C$ and $\delta$ to denote constants which
are not important for our exposition. Moreover, the same symbol may be used
for different constants appearing in the same proof.

We consider a measurable mean-square continuous zero-mean homogeneous
isotropic real-valued random field $\eta (x),\ x\in \mathbb{R}^{d},$ defined
on a probability space $(\Omega ,\mathcal{F},P),$ with the covariance function 
\begin{equation*}
	\text{\rm{B}}(r):=\mathrm{Cov}\left( \eta (x),\eta (y)\right)
	=\int_{0}^{\infty }Y_{d}(rz)\,\mathrm{d}\Phi (z),\ x,y\in \mathbb{R}^{d},
\end{equation*}%
where $r:=\left\Vert x-y\right\Vert ,$ $\Phi (\cdot )$ is the isotropic
spectral measure, the function $Y_{d}(\cdot )$ is defined by 
\begin{equation*}
	Y_{d}(z):=2^{(d-2)/2}\Gamma \left( \frac{d}{2}\right) \ J_{(d-2)/2}(z)\
	z^{(2-d)/2},\quad z\geq 0,
\end{equation*}%
$J_{(d-2)/2}(\cdot )$ being the Bessel function of the first kind of order $%
(d-2)/2.$

\begin{definition}
	The random field $\eta (x),$ $x\in \mathbb{R}^{d},$ as defined above is said
	to possess an absolutely continuous spectrum if there exists a function $%
	f(\cdot )$ such that 
	\begin{equation*}
		\Phi (z)=2\pi ^{d/2}\Gamma ^{-1}\left( d/2\right) \int_{0}^{z}u^{d-1}f(u)\,%
		\mathrm{d}u,\quad z\geq 0,\quad u^{d-1}f(u)\in L_{1}(\mathbb{R}_{+}).
	\end{equation*}%
	The function $f(\cdot )$ is called the isotropic spectral density function
	of the field~$\eta (x).$ In this case, the field $\eta (x)$ with an absolutely continuous spectrum has the isonormal spectral representation 
	\begin{equation*}
		\eta (x)=\int_{\mathbb{R}^d}e^{i(\lambda ,x)}\sqrt{f\left( \left\| \lambda
			\right\| \right) }W(\mathrm{d}\lambda ),
	\end{equation*}
	where $W(\cdot )$ is the complex Gaussian white noise random measure on $%
	\mathbb{R}^d.$
\end{definition}

Consider a Jordan-measurable bounded set $\Delta \subset \mathbb{R}%
^{d}$ such that $\left\vert \Delta \right\vert >0$ and $\Delta $ contains
the origin in its interior. Let $\Delta (r),r>0,$ be the homothetic image of
the set $\Delta ,$ with the centre of homothety at the origin and
the coefficient $r>0,$ that is $\left\vert \Delta (r)\right\vert
=r^{d}\left\vert \Delta \right\vert .$

Consider the uniform distribution on $\Delta (r)$ with the probability
density function (pdf) $r^{-d}\left|\Delta \right|^{-1} \chi_{%
	\raisebox{-3pt}{\scriptsize $\Delta(r)$}}(x),$ $x\in\mathbb{R}^{d},$ where $%
\chi_{\raisebox{-3pt}{\scriptsize $A$}}(\cdot)$ is the indicator function of
a set $A.$

\begin{definition}
	Let $U$ and $V$ be two random vectors which are independent and uniformly
	distributed inside the set $\Delta (r).$ We denote by $\psi _{\Delta (r)}(z
	),$ $z \geq 0,$ the pdf of the distance $\left\| U-V\right\| $ between $U$
	and $V.$
\end{definition}

Note that $\psi _{\Delta (r)}(z)=0$ if $z>diam\left\{ \Delta (r)\right\} .$
Using the above notations, one can obtain the representation 
\begin{equation*}
	\int_{\Delta (r)}\int_{\Delta (r)}\varUpsilon(\left\Vert x-y\right\Vert )\,%
	\mathrm{d}x\,\mathrm{d}y=\left\vert \Delta \right\vert ^{2}r^{2d}\mathbf{E}\ %
	\varUpsilon(\left\Vert U-V\right\Vert )
\end{equation*}%
\begin{equation}
	=\left\vert \Delta \right\vert ^{2}r^{2d}\int_{0}^{diam\left\{ \Delta
		(r)\right\} }\varUpsilon(z)\ \psi _{\Delta (r)}(z)\,\mathrm{d}z,
	\label{dint}
\end{equation}%
where $\varUpsilon(\cdot )$ is an integrable Borel function.

\begin{rem}
	If $\Delta (r)$ is the ball $v(r):=\{x\in \mathbb{R }^{d}:\left\| x\right\|
	<r\},$ then
	
	\begin{equation*}
		\psi _{v(r)}(z)=d\,r^{-d} z^{d-1}I_{1-(z/2r)^{2}}\left( \frac{d+1}{2} ,\frac{%
			1}{2}\right) ,\quad 0\leq z \leq 2r,
	\end{equation*}
	where 
	\begin{equation*}
		I_{\mu }(p,q):=\frac{\Gamma (p+q)}{\Gamma (p)\ \Gamma (q)}\int_{0}^{ \mu
		}u^{p-1}(1-u)^{q-1}\,\mathrm{d}u,\quad \mu \in ( 0,1],\quad p>0,\ q>0 ,
	\end{equation*}
	is the incomplete beta function, see \cite{iv}.
\end{rem}

\begin{rem}
	Let $H_{k}(u)$, $k\geq 0$, $u\in \mathbb{R}$, be the Hermite polynomials,
	see~\cite{pec}. If $(\xi _{1},\ldots ,\xi _{2p})$ is a $2p$-dimensional
	zero-mean Gaussian vector with 
	\begin{equation*}
		\mathbf{E}\xi _{j}\xi _{k}=%
		\begin{cases}
			1, & \mbox{if }k=j, \\ 
			r_{j}, & \mbox{if }k=j+p\ \mbox{and }1\leq j\leq p, \\ 
			0, & \mbox{otherwise,}%
		\end{cases}%
	\end{equation*}%
	then 
	\begin{equation*}
		\mathbf{E}\ \prod_{j=1}^{p}H_{k_{j}}(\xi _{j})H_{m_{j}}(\xi
		_{j+p})=\prod_{j=1}^{p}\delta _{k_{j}}^{m_{j}}\ k_{j}!\ r_{j}^{k_{j}}.
	\end{equation*}
\end{rem}

The Hermite polynomials form a complete orthogonal system in the\\ Hilbert space 
\begin{equation*}
	{L}_{2}(\mathbb{R},\phi (w)\,dw)=\left\{ G:\int_{\mathbb{R}}G^{2}(w)\phi
	(w)\,\mathrm{d}w<\infty \right\} ,\quad \phi (w):=\frac{1}{\sqrt{2\pi }}e^{-%
		\frac{w^{2}}{2}}.
\end{equation*}

An arbitrary function $G(w)\in {L}_{2}(\mathbb{R},\phi(w )\, dw)$ admits the
mean-square convergent expansion 
\begin{equation}  \label{herm}
	G(w)=\sum_{j=0}^{\infty }\frac{C_{j}H_{j}(w) }{j !}, \qquad C_{j }:=\int_{%
		\mathbb{R}}G(w)H_{j }(w)\phi ( w )\,\mathrm{d}w.
\end{equation}

By Parseval's identity 
\begin{equation}  \label{par}
	\sum_{j=0}^\infty\frac{C_{j}^{2}}{j !} =\int_{\mathbb{R}}G^2(w) \phi ( w )\,%
	\mathrm{d}w.
\end{equation}

\begin{definition}\label{rank} \rm{\cite{Taq1}} Let $G(w)\in {L}_{2}(\mathbb{R},\phi (w)\,dw)$ and
	assume there exists an integer $\kappa \in \mathbb{N}$ such that $C_{j}=0$, for all $%
	0\leq j\leq \kappa -1,$ but $C_{\kappa }\neq 0.$ Then $\kappa $ is called
	the Hermite rank of $G(\cdot )$ and is denoted by $H\mbox{rank}\,G.$
\end{definition}

\begin{definition}
	\rm{\cite{bin}} A measurable function $L:(0,\infty )\rightarrow
	(0,\infty )$ is said to be slowly varying at infinity if for all $t>0,$%
	\begin{equation*}
		\lim\limits_{r \rightarrow \infty }\frac{L(r t)}{L(r )}=1.
	\end{equation*}
\end{definition}

By the representation theorem~\cite[Theorem 1.3.1]{bin}, there exists $C > 0$
such that for all $r \ge C$ the function $L(\cdot)$ can be written in the
form 
\begin{equation*}
	L(r) = \exp \left(\zeta_1(r) + \int_C^r \frac{\zeta_2(u)}{u}\,\mathrm{d}u
	\right),
\end{equation*}
where $\zeta_1(\cdot)$ and $\zeta_2(\cdot)$ are such measurable and bounded
functions that \\
$\zeta_2(r)\to 0$ and $\zeta_1(r)\to C_0$ $(|C_0|<\infty),$
when $r\to \infty.$

If $L(\cdot )$ varies slowly, then $r^{a}L(r)\rightarrow \infty ,$ $%
r^{-a}L(r)\rightarrow 0$ for an arbitrary $a>0$ when $r\rightarrow \infty ,$
see Proposition 1.3.6 \cite{bin}.

\begin{definition}
	\cite{bin} A measurable function $g:(0,\infty )\rightarrow (0,\infty )$ is
	said to be regularly varying at infinity, denoted $g(\cdot )\in R_{\tau }$,
	if there exists $\tau $ such that, for all $t>0,$ it holds that 
	\begin{equation*}
		\lim\limits_{r \rightarrow \infty }\frac{g(r t)}{g(r )}%
		=t^{\tau }.
	\end{equation*}
\end{definition}

\begin{definition}
	\label{sr2}\cite{bin} Let	$g:(0,\infty )\rightarrow (0,\infty )$ be a measurable function and $g(x)\rightarrow 0$
	as $x\rightarrow 0$. Then a slowly varying function $L(\cdot )$ is said to be slowly varying with
	remainder of type 2, or that it belongs to the class SR2, if 
	\begin{equation*}
		\forall x >1:\quad \frac{L(r x)}{L(r)}-1\sim k(x
		)g(r),\quad r\rightarrow \infty ,
	\end{equation*}%
	for some function $k(\cdot )$.
	
	If there exists $x $ such that $k(x )\neq 0$ and $k(x \mu
	)\neq k(\mu )$ for all $\mu $, then $g(\cdot )\in R_{\tau }$ for some $\tau
	\leq 0$ and $k(x )=ch_{\tau }(x )$, where 
	\begin{equation}
		h_{\tau }(x )=%
		\begin{cases}
			\ln (x ),\quad if\,\tau =0, \\ 
			\frac{x ^{\tau }-1}{\tau },\quad \,if\,\tau \neq 0.%
		\end{cases}
		\label{htau}
	\end{equation}
\end{definition}

\section{Assumptions and auxiliary results}

\label{sec3}

In this section, we list the main assumptions and some auxiliary results
from \cite{mink} which will be used to obtain the rate of convergence in
non-central limit theorems.

\begin{assumption}
	\label{ass1} {\ Let $\eta (x),$ $x\in \mathbb{R}^{d}$, be a homogeneous
		isotropic Gaussian random field with $\mathbf{E}\eta (x)=0$ and a covariance
		function $B(x)$ such that }
	
	\begin{equation*}
		B(0)=1,\quad B(x)=\mathbf{E}\eta (0) \eta (x)= \left\Vert x\right\Vert
		^{-\alpha }L(\left\Vert x\right\Vert ),
	\end{equation*}
	where $L(\left\Vert \cdot\right\Vert )$ is a function slowly varying at
	infinity.
\end{assumption}

In this paper we restrict our consideration to $\alpha \in (0,d/\kappa ),$
where $\kappa $ is the Hermite rank in Definition~\ref{rank}. For
such $\alpha $ the covariance function $B(x)$ satisfying Assumption~\ref%
{ass1} is not integrable, which corresponds to the case of long-range
dependence.

Let us denote 
\begin{equation*}
	K_r :=\int_{\Delta(r)}G(\eta (x))\,\mathrm{d}x \quad \mbox{and} \quad
	K_{r,\kappa} :=\frac{C_\kappa}{\kappa !} \int_{\Delta(r)}H_\kappa (\eta
	(x))\,\mathrm{d}x,
\end{equation*}
where $C_\kappa$ is defined by (\ref{herm}).

\begin{theorem}
	{\rm{\cite{mink}}} Suppose that $\eta (x),$ $x\in \mathbb{R}^{d},$
	satisfies Assumption~{\rm\ref{ass1}} and $H\mathrm{rank}\,G=\kappa \in \mathbb{N}.
	$ If at least one of the following random variables
	\begin{equation*}
		\frac{K_{r}}{\sqrt{\mathbf{Var}\text{ }K_{r}}},\quad \frac{K_{r}}{\sqrt{%
				\mathbf{Var}\ K_{r,\kappa }}}\quad \mbox{and}\quad \frac{K_{r,\kappa }}{%
			\sqrt{\mathbf{Var}\ K_{r,\kappa }}},
	\end{equation*}%
	has a limit distribution, then the limit distributions of the other random variables also exist and
	they coincide when $r\rightarrow \infty .$
\end{theorem}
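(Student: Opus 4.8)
The plan is to show that the top Hermite component $K_{r,\kappa}$ already determines the limit, so that the three normalised variables agree up to terms negligible in probability. Expanding $G(w)=\sum_{j\ge\kappa}\frac{C_j}{j!}H_j(w)$ and writing $K_{r,j}:=\frac{C_j}{j!}\int_{\Delta(r)}H_j(\eta(x))\,dx$, we have $K_r=\sum_{j\ge\kappa}K_{r,j}$. By the product-moment formula for Hermite polynomials recalled above, $\mathbf{E}[H_j(\eta(x))H_k(\eta(y))]=\delta_{jk}\,j!\,B(\|x-y\|)^j$, so the $K_{r,j}$ are pairwise uncorrelated; hence $\mathbf{Var}\,K_r=\sum_{j\ge\kappa}\mathbf{Var}\,K_{r,j}$ and $\mathbf{Var}(K_r-K_{r,\kappa})=\sum_{j>\kappa}\mathbf{Var}\,K_{r,j}$. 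Everything will follow, via Slutsky's theorem, once we establish the single limit
\begin{equation*}
\frac{\mathbf{Var}(K_r-K_{r,\kappa})}{\mathbf{Var}\,K_{r,\kappa}}\longrightarrow 0,\qquad r\to\infty.
\end{equation*}

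First I would pin down the denominator. The product-moment formula gives $\mathbf{Var}\,K_{r,\kappa}=\frac{C_\kappa^2}{\kappa!}\int_{\Delta(r)}\int_{\Delta(r)}B(\|x-y\|)^\kappa\,dx\,dy$, and formula~(\ref{dint}) together with the homothety scaling $\psi_{\Delta(r)}(z)=r^{-1}\psi_\Delta(z/r)$ (which follows from $U,V\mapsto U/r,V/r$) and the change of variables $z=rs$ yields
\begin{equation*}
\mathbf{Var}\,K_{r,\kappa}=\frac{C_\kappa^2}{\kappa!}\,|\Delta|^2 r^{2d}\int_0^{\mathrm{diam}\{\Delta\}}B(rs)^\kappa\,\psi_\Delta(s)\,ds.
\end{equation*}
Substituting $B(rs)^\kappa=(rs)^{-\alpha\kappa}L(rs)^\kappa$ and using the uniform convergence theorem for slowly varying functions (with Potter's bounds to justify dominated convergence near the origin, where $\psi_\Delta(s)=O(s^{d-1})$), the assumption $\alpha\kappa<d$ makes $\int_0^{\mathrm{diam}\{\Delta\}}s^{-\alpha\kappa}\psi_\Delta(s)\,ds$ finite and gives $\mathbf{Var}\,K_{r,\kappa}\sim c_\kappa\,r^{2d-\alpha\kappa}L(r)^\kappa$ with $c_\kappa>0$.

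The crux is to dominate the numerator. Setting $R(z):=\sum_{j>\kappa}\frac{C_j^2}{j!}B(z)^j$, one has $\mathbf{Var}(K_r-K_{r,\kappa})=\int_{\Delta(r)}\int_{\Delta(r)}R(\|x-y\|)\,dx\,dy$. Since $0\le B(z)\le B(0)=1$, pulling out $B(z)^{\kappa+1}$ and bounding the remaining series by Parseval's identity~(\ref{par}) gives the decisive pointwise estimate
\begin{equation*}
0\le R(z)\le B(z)^{\kappa+1}\sum_{j>\kappa}\frac{C_j^2}{j!}\le \|G\|^2_{L_2(\mathbb{R},\phi)}\,B(z)^{\kappa+1},
\end{equation*}
collapsing the whole tail into the single power $B^{\kappa+1}$. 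I would then split on integrability. If $\alpha(\kappa+1)<d$, the computation of the previous paragraph gives $\int_{\Delta(r)^2}B^{\kappa+1}\sim c'\,r^{2d-\alpha(\kappa+1)}L(r)^{\kappa+1}$, so the ratio is of order $r^{-\alpha}L(r)\to0$. If $\alpha(\kappa+1)\ge d$, then $B^{\kappa+1}\in L_1(\mathbb{R}^d)$ and the elementary bound $\int_{\Delta(r)^2}B^{\kappa+1}\le|\Delta(r)|\int_{\mathbb{R}^d}B(\|u\|)^{\kappa+1}\,du=O(r^d)$ applies (with an extra $\log r$ in the borderline case $\alpha(\kappa+1)=d$); since $d-\alpha\kappa>0$ the ratio is of order $r^{-(d-\alpha\kappa)}L(r)^{-\kappa}\to0$. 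In both regimes the displayed limit holds.

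From the displayed limit we get $\frac{K_r-K_{r,\kappa}}{\sqrt{\mathbf{Var}\,K_{r,\kappa}}}\to0$ in $L_2$, hence in probability, and also $\mathbf{Var}\,K_r/\mathbf{Var}\,K_{r,\kappa}\to1$. Therefore $\frac{K_r}{\sqrt{\mathbf{Var}\,K_{r,\kappa}}}$ and $\frac{K_{r,\kappa}}{\sqrt{\mathbf{Var}\,K_{r,\kappa}}}$ differ by $o_P(1)$, while $\frac{K_r}{\sqrt{\mathbf{Var}\,K_r}}$ equals $\frac{K_r}{\sqrt{\mathbf{Var}\,K_{r,\kappa}}}$ times a deterministic factor tending to $1$; by Slutsky's theorem, convergence in distribution of any one of the three forces convergence of the other two to the same limit. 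I expect the main obstacle to be the numerator estimate when $\alpha(\kappa+1)\ge d$: a naive term-by-term treatment of $\sum_{j>\kappa}\mathbf{Var}\,K_{r,j}$ does not converge uniformly in $r$, and it is precisely the Parseval collapse to $B^{\kappa+1}$, combined with the dichotomy on whether $B^{\kappa+1}$ is integrable, that lets the argument run under the sole hypothesis $\alpha<d/\kappa$.
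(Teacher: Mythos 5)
Your argument is correct and coincides with the approach of the cited source \cite{mink} (and with the variance estimate the present paper reuses in its proof of Theorem~\ref{th5}): orthogonality of the Hermite components, the collapse of the tail $\sum_{j>\kappa}\frac{C_j^2}{j!}B^j$ onto $\|G\|^2_{L_2(\phi)}B^{\kappa+1}$ via $0\le B\le 1$ and Parseval, the dichotomy on integrability of $B^{\kappa+1}$, and Slutsky. The only cosmetic imprecision is calling the borderline correction a ``$\log r$'' factor --- in general it is a slowly varying (de Haan--type) factor, which is still $o(r^{\varepsilon})$ and so does not affect the conclusion.
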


\begin{assumption}
	\label{ass2} The random field $\eta(x),$ $x \in \mathbb{R}^d,$ has the
	spectral density
	
	\begin{equation*}
		f(\left\| \lambda \right\| )= c_2(d,\alpha )\left\| \lambda \right\|
		^{\alpha -d}L\left( \frac 1{\left\| \lambda \right\| }\right),
	\end{equation*}
	where 
	\begin{equation*}
		c_2(d,\alpha ):=\frac{\Gamma \left( \frac{d-\alpha }2\right) }{2^\alpha \pi
			^{d/2}\Gamma \left( \frac \alpha 2\right) },
	\end{equation*}
	and $L(\left\Vert \cdot\right\Vert )$ is a locally bounded function which is
	slowly varying at infinity and satisfies for sufficiently large $r$ the
	condition 
	\begin{equation}  \label{gr}
		\left|1-\frac{L(tr)}{L(r)}\right|\le C\,g(r)h_{\tau}(t),\ t\ge 1,
	\end{equation}
	where $g(\cdot) \in R_{\tau} , \tau \le 0$, such that $g(x) \to 0, \ x \to
	\infty$, and $h_{\tau}(t)$ is defined by~\rm{(\ref{htau}).}
\end{assumption}

\begin{rem}
In applied statistical analysis of long-range dependent models researchers often assume an equivalence of Assumptions~1 and 2. However, this claim is not true in general, see \cite{gub, leoole}. This is the main reason of using both assumptions to formulate the most general result in Theorem~\ref{th5}. However, in various specific cases just one of the assumptions may be sufficient. For example, if $f(\cdot)$ is decreasing in a neighbourhood of zero and continuous for all $\lambda \neq 0,$ then by Tauberian Theorem~4 \cite{leoole} both assumptions are simultaneously satisfied. A detailed discussion of relations between Assumption~\ref{ass1} and \ref{ass2} and various examples can be found in \cite{leoole,ole}. Some important models  used in spatial data analysis and geostatistics that simultaneously satisfy Assumptions~1 and 2 are Cauchy and Linnik's fields, see~\cite{Main}. Their covariance functions are of the form $B\left( x\right) =\left( 1+\left\Vert x\right\Vert^{{}\sigma}\right)
^{-\theta},$ $\sigma \in \left( 0,2\right],$ $\theta >0.$  Exact expressions for their spectral densities in the form required by Assumption 2 are provided in Section 5  \cite{Main}.
\end{rem}

The remarks below clarify condition (\ref{gr}) and compare it with the
assumptions used in \rmfamily\cite{Main}.

\begin{rem}
	\label{rem1} This assumption implies weaker restrictions on the spectral
	density than the ones used in \rm{\cite{Main}}. Slowly varying functions
	in Assumption \ref{ass2} can tend to infinity or zero. This is an
	improvement over \cite{Main} where slowly varying functions were assumed to
	converge to a constant. For example, a function that satisfies this
	assumption, but would not fit that of \rm{\cite{Main}, }is $\ln (\cdot )$.
\end{rem}

\begin{rem}
	If we consider the equivalence in Definition \ref{sr2} in the uniform sense,
	then all the functions in the class SR2 satisfy condition (\ref{gr}). If we
	consider this equivalence in the non-uniform sense, then there are functions
	from SR2 that do not satisfy (\ref{gr}). An example of such functions is $%
	\ln ^{2}(\cdot )$.
\end{rem}

\begin{rem}
	\label{rem3} By Corollary 3.12.3 \rm{\cite{bin} }for $\tau \neq 0$ the
	slowly varying function $L(\cdot )$ in Assumption \ref{ass2} can be
	represented as 
	\begin{equation*}
		L(x)=C\left( 1+c\tau ^{-1}g(x)+o(g(x))\right) .
	\end{equation*}
	As we can see $L(\cdot )$ converges to some constant as $x$ goes to infinity.
	This makes the case $\tau =0$ particularly interesting as this is the only
	case when a slowly varying function with remainder can tend to infinity or
	zero.
\end{rem}

\begin{lemma}
	~\label{lem0} If $L$ satisfies {\rm{(\ref{gr})}}, then for any $%
	k\in \mathbb{N}$, $\delta >0$, and sufficiently large $r$ 
	\begin{equation*}
		\left\vert 1-\frac{L^{k/2}(tr)}{L^{k/2}(r)}\right\vert \leq C\,g(r)h_{\tau
		}(t)t^{\delta },\ t\geq 1.
	\end{equation*}
\end{lemma}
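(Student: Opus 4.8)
The plan is to reduce the power $k/2$ to the base case $k=2$ covered by the hypothesis~\eqref{gr} via a telescoping/factorization argument, at the cost of a polynomial correction factor $t^{\delta}$ that absorbs the growth produced by the mean value theorem. First I would write $a := L(tr)/L(r)$ and observe that, since $L$ is slowly varying and $t\ge 1$, the quantity $a$ is close to $1$ for large $r$ (uniformly for $t$ in any bounded range, and controlled by~\eqref{gr} for growing $t$). The key algebraic identity is the factorization
\begin{equation*}
	1-a^{k/2} = (1-a)\sum_{j=0}^{?}(\text{powers of }a),
\end{equation*}
which works cleanly when $k$ is even but must be handled by an analytic estimate when $k/2$ is not an integer. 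Rather than splitting into parity cases, I would treat $s\mapsto s^{k/2}$ directly by the mean value theorem: there exists $\xi$ between $a$ and $1$ with
\begin{equation*}
	\left|1 - a^{k/2}\right| = \frac{k}{2}\,\xi^{k/2-1}\,\left|1-a\right|.
\end{equation*}

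Next I would bound the derivative factor $\xi^{k/2-1}$. Since $\xi$ lies between $1$ and $L(tr)/L(r)$, I would use the representation theorem for slowly varying functions (quoted in the excerpt) or directly the Potter-type bound: for any $\delta>0$ and sufficiently large $r$ one has $L(tr)/L(r)\le C\,t^{\delta'}$ for $t\ge 1$, with $\delta'$ as small as desired. Consequently $\xi^{k/2-1}\le C\,t^{\delta}$ once $\delta'$ is chosen so that $(k/2-1)\delta' \le \delta$ (taking $\xi^{k/2-1}\le\max(1,a^{k/2-1})$). Substituting the hypothesis $\left|1-a\right|=\left|1-\tfrac{L(tr)}{L(r)}\right|\le C\,g(r)h_{\tau}(t)$ from~\eqref{gr} then yields
\begin{equation*}
	\left|1-\frac{L^{k/2}(tr)}{L^{k/2}(r)}\right| \le \frac{k}{2}\,C\,t^{\delta}\cdot C\,g(r)h_{\tau}(t) \le C\,g(r)h_{\tau}(t)\,t^{\delta},
\end{equation*}
which is exactly the claimed inequality after renaming constants (recalling that $C$ denotes an unimportant constant that may change line to line).

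The main obstacle I anticipate is controlling the derivative factor $\xi^{k/2-1}$ uniformly in $t\ge 1$: the exponent $k/2-1$ may be positive (for $k\ge 3$), so $\xi^{k/2-1}$ can grow as $t\to\infty$, and this is precisely why the extra $t^{\delta}$ appears and cannot be dispensed with. The delicate point is to invoke the Potter bound with an exponent $\delta'$ small enough that $(k/2-1)\delta'\le\delta$ while still being valid for all $t\ge1$ and all sufficiently large $r$; one must be careful that the threshold on $r$ depends on $k$ and $\delta$ but not on $t$. A secondary technical point is the case $k=1$ (or any $k<2$), where $k/2-1<0$ and $\xi^{k/2-1}$ is instead bounded by a constant since $\xi$ is bounded below away from $0$ for large $r$; this case only makes the bound easier, so the stated $t^{\delta}$ is harmless. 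Everything else is a routine application of~\eqref{gr} and standard slowly varying function estimates from~\cite{bin}.
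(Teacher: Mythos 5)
Your proposal is correct and follows essentially the same route as the paper: the mean value theorem applied to $u\mapsto u^{k/2}$, bounding the intermediate-point factor by $\max\bigl(1,(L(tr)/L(r))^{k/2-1}\bigr)$, and then invoking the Potter-type bound (Theorem~1.5.6 of \cite{bin}) together with condition~(\ref{gr}) to absorb the growth into the factor $t^{\delta}$. The technical concerns you flag (choosing $\delta'$ so that $(k/2-1)\delta'\le\delta$, uniformity in $t\ge 1$) are handled in the paper exactly as you describe.
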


\begin{proof}
	Applying the mean value theorem to the function $f(u) = u^{n},$ $n\in \mathbb{R}$, on $\mathbf{A} = [\min(1,u),\max(1,u)]$ we obtain the inequality
	\begin{equation*}
		1 - x^n = n\theta^{n-1}(1 - x) \le n(1-x)\max(1,x^{n-1}),\, \theta \in \mathbf{A}.
	\end{equation*}
	
	Now, using this inequality for $x = \frac{L(tr)}{L(r)}$ and $n = k/2$ we get 
	\begin{equation}\label{mideq}
		\left|1-\frac{L^{k/2}(tr)}{L^{k/2}(r)}\right| \le n\left|1-\frac{L(tr)}{L(r)}\right|\max\left(1,\left(\frac{L(tr)}{L(r)}\right)^{\frac{k}{2}-1}\right).
	\end{equation}
	By Theorem~1.5.6 \cite{bin} we know there exists $c>0$ such that for any $\delta_1>0$
	\[\frac{L(tr)}{L(r)}\le C\cdot t^{\delta_1},\ t\ge 1.\]
	Applying this result and condition~{\rm(\ref{gr})} to \rm(\ref{mideq}) we get
	\begin{equation*}
		\left|1-\frac{L^{k/2}(tr)}{L^{k/2}(r)}\right|\le
		C\,g(r)h_{\tau}(t)\max\left(1,t^{\delta_1\left(\frac{k}{2}-1\right)}\right)\le C\,g(r)h_{\tau}(t)t^{\delta},\ t\ge 1.
	\end{equation*}
\end{proof}

Let us denote the Fourier transform of the indicator function of the set $%
\Delta$ by 
\begin{equation*}
	K_{\Delta}(x):=\int_{\Delta }e^{i(x,u)} \,\mathrm{d}u,\quad x\in\mathbb{R}^d.
\end{equation*}

\begin{lemma}
	\label{finint} {\rm{\cite{mink}}} If $t_1,...,t_\kappa,$ $\kappa\ge 1,$
	are positive constants such that it holds $\sum_{i=1}^\kappa t_i <d,$ then 
	\begin{equation*}
		\int_{\mathbb{R}^{d\kappa}}|K_{\Delta}(\lambda _1+\cdots +\lambda
		_\kappa)|^2 \frac{\mathrm{d}\lambda _1\ldots \,\mathrm{d}\lambda _\kappa}{%
			\left\| \lambda _1\right\| ^{d-t_1}\cdots \left\| \lambda _\kappa\right\|
			^{d-t_\kappa}}<\infty .
	\end{equation*}
\end{lemma}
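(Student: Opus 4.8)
The plan is to recognise the multiple integral, after integrating out all variables except their sum, as a $\kappa$-fold convolution of Riesz-type kernels, and then to reduce the problem to a single integral against $|K_{\Delta}|^{2}$ which is controlled by boundedness of $K_{\Delta}$ near the origin and by Plancherel's theorem at infinity. Since the integrand is nonnegative, Tonelli's theorem lets me integrate in any convenient order. I would change variables to $(\lambda_{1},\dots,\lambda_{\kappa-1},\mu)$ with $\mu:=\lambda_{1}+\cdots+\lambda_{\kappa}$ (Jacobian $1$), so that $\lambda_{\kappa}=\mu-\sum_{i<\kappa}\lambda_{i}$ and the integral becomes
\begin{equation*}
\int_{\mathbb{R}^{d}}|K_{\Delta}(\mu)|^{2}\,(\phi_{1}*\cdots*\phi_{\kappa})(\mu)\,\mathrm{d}\mu,\qquad \phi_{i}(\lambda):=\|\lambda\|^{-(d-t_{i})}.
\end{equation*}
Writing $a_{i}:=d-t_{i}$, the hypotheses $t_{i}>0$ and $\sum_{i}t_{i}<d$ give $a_{i}\in(0,d)$ for each $i$. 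The key bookkeeping is that convolving two positive homogeneous kernels adds $d$ to the total degree of homogeneity, so the $\kappa$-fold convolution is homogeneous of degree $-\sum_{i}a_{i}+(\kappa-1)d=-(d-\sum_{i}t_{i})$; hence one expects $(\phi_{1}*\cdots*\phi_{\kappa})(\mu)\le C\,\|\mu\|^{-(d-\sum_{i}t_{i})}$.

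\textbf{Main step.} To make this rigorous I would iterate the standard bound $\|\cdot\|^{-a}*\|\cdot\|^{-b}\le C\,\|\cdot\|^{-(a+b-d)}$, valid for $a,b\in(0,d)$ with $a+b>d$. Setting $c_{j}:=d-\sum_{i\le j}t_{i}$, an induction shows that the convolution of the first $j$ kernels is bounded by a multiple of $\|\cdot\|^{-c_{j}}$, with each intermediate exponent satisfying $c_{j}\in(0,d)$ because $0<\sum_{i\le j}t_{i}\le\sum_{i\le\kappa}t_{i}<d$. The three admissibility conditions at the $(j{+}1)$-th step reduce to $c_{j}<d$, $a_{j+1}<d$, and $c_{j}+a_{j+1}=2d-\sum_{i\le j+1}t_{i}>d$, the last being exactly the hypothesis $\sum_{i\le j+1}t_{i}<d$. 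Verifying that these conditions hold at every step, so that each convolution converges at infinity and the iteration closes, is the crux of the argument and the one place where the assumption $\sum_{i}t_{i}<d$ is genuinely used. At the final step one obtains the bound with $c_{\kappa}=d-\sum_{i}t_{i}\in(0,d)$.

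\textbf{Conclusion.} It then remains to show $\int_{\mathbb{R}^{d}}|K_{\Delta}(\mu)|^{2}\,\|\mu\|^{-c_{\kappa}}\,\mathrm{d}\mu<\infty$ with $c_{\kappa}\in(0,d)$. I would split the domain at $\|\mu\|=1$. On $\{\|\mu\|\le 1\}$ the factor $|K_{\Delta}(\mu)|^{2}\le|\Delta|^{2}$ is bounded, since $K_{\Delta}$ is the Fourier transform of the integrable function $\chi_{\Delta}$, and $\int_{\{\|\mu\|\le 1\}}\|\mu\|^{-c_{\kappa}}\,\mathrm{d}\mu<\infty$ because $c_{\kappa}<d$ makes the singularity integrable. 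On $\{\|\mu\|>1\}$ one has $\|\mu\|^{-c_{\kappa}}\le 1$, while $\int_{\mathbb{R}^{d}}|K_{\Delta}(\mu)|^{2}\,\mathrm{d}\mu<\infty$ by Plancherel's theorem, as $\chi_{\Delta}\in L_{2}(\mathbb{R}^{d})$ for the bounded set $\Delta$. Both pieces are finite, proving the claim. For $\kappa=1$ no convolution is needed and this final split is the entire argument, which also serves as the base case for the induction above.
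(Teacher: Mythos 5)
Your proof is correct: the change of variables to $\mu=\lambda_1+\cdots+\lambda_\kappa$, the iterated Riesz composition bound $\|\cdot\|^{-a}*\|\cdot\|^{-b}\le C\|\cdot\|^{-(a+b-d)}$ with the exponent bookkeeping $c_j=d-\sum_{i\le j}t_i\in(0,d)$, and the final split of $\int|K_\Delta(\mu)|^2\|\mu\|^{-c_\kappa}\,\mathrm{d}\mu$ at $\|\mu\|=1$ using boundedness of $K_\Delta$ and Plancherel all hold up. The paper itself does not reprove this lemma but cites it from \cite{mink}, where the argument is essentially the same convolution-of-Riesz-kernels reduction, so your approach matches the source.
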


\begin{theorem}
	{\rm{\cite{mink}}}\label{th2} Let $\eta(x),$ $x\in \mathbb{R}^d,$ be a
	homogeneous isotropic Gaussian random field with $\mathbf{E}\eta(x)=0.$ If
	Assumptions~{\rm{\ref{ass1}}} and {\rm{\ref{ass2}}} hold, then for $r\to
	\infty$ the finite-dimensional distributions of 
	\begin{equation*}
		X_{r,\kappa}:=r^{(\kappa\alpha
			)/2-d}L^{-\kappa/2}(r)\int_{\Delta(r)}H_\kappa(\eta (x))\,\mathrm{d}x
	\end{equation*}
	converge weakly to the finite-dimensional distributions of 
	\[X_\kappa(\Delta) :=c_2^{\kappa/2}(d,\alpha ) \int_{\mathbb{R}^{d\kappa}}^{{%
					\prime }}K_{\Delta }(\lambda _1+\cdots +\lambda _\kappa)\]
	\begin{equation}  \label{lim1}
		 \times\frac{W(\mathrm{d}%
			\lambda _1)\ldots W(\mathrm{d}\lambda _\kappa)}{\left\| \lambda _1\right\|
			^{(d-\alpha )/2}\cdots \left\| \lambda _\kappa\right\| ^{(d-\alpha )/2}},
	\end{equation}
	where $\int_{\mathbb{R}^{d\kappa}}^{{\prime}}$ denotes the multiple Wiener-It%
	\^{o} integral.
\end{theorem}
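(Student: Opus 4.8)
The plan is to lift the statement to the level of multiple Wiener--It\^{o} integrals and then reduce weak convergence to an $L_2$-convergence of deterministic kernels. First I would use the isonormal spectral representation of $\eta(x)$ together with the It\^{o} (diagram) formula linking Hermite polynomials and iterated stochastic integrals, see \cite{Dob,Taq1,pec}. Since Assumption~\ref{ass1} gives $\int_{\mathbb{R}^d}f(\|\lambda\|)\,\mathrm{d}\lambda=B(0)=1$, the kernel $e^{i(\lambda,x)}\sqrt{f(\|\lambda\|)}$ has unit $L_2$-norm and
\[
H_\kappa(\eta(x))=\int_{\mathbb{R}^{d\kappa}}^{\prime}e^{i(\lambda_1+\cdots+\lambda_\kappa,x)}\prod_{j=1}^\kappa\sqrt{f(\|\lambda_j\|)}\,W(\mathrm{d}\lambda_1)\cdots W(\mathrm{d}\lambda_\kappa).
\]
Integrating over $\Delta(r)$ and interchanging the deterministic and stochastic integrations (stochastic Fubini) replaces the exponential by $K_{\Delta(r)}(\lambda_1+\cdots+\lambda_\kappa)$, while the homothety defining $\Delta(r)$ yields the scaling relation $K_{\Delta(r)}(\lambda)=r^dK_\Delta(r\lambda)$.

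Next I would perform the self-similar change of variables $\mu_j=r\lambda_j$. The scaling property of the complex Gaussian white noise contributes a factor $r^{-d\kappa/2}$, and Assumption~\ref{ass2} gives $f(\|\mu_j\|/r)=c_2(d,\alpha)(\|\mu_j\|/r)^{\alpha-d}L(r/\|\mu_j\|)$. A direct power count shows that the exponents of $r$ produced by the normalization $r^{(\kappa\alpha)/2-d}L^{-\kappa/2}(r)$, by $K_{\Delta(r)}$, by the white-noise scaling, and by the spectral density cancel exactly, leaving
\[
X_{r,\kappa}\stackrel{d}{=}c_2^{\kappa/2}(d,\alpha)\int_{\mathbb{R}^{d\kappa}}^{\prime}Q_r(\mu_1,\ldots,\mu_\kappa)\,W(\mathrm{d}\mu_1)\cdots W(\mathrm{d}\mu_\kappa),
\]
where
\[
Q_r(\mu_1,\ldots,\mu_\kappa):=K_\Delta(\mu_1+\cdots+\mu_\kappa)\prod_{j=1}^\kappa\frac{L^{1/2}(r/\|\mu_j\|)}{L^{1/2}(r)\,\|\mu_j\|^{(d-\alpha)/2}}.
\]
The candidate limit kernel $Q$ is obtained by deleting the ratios $L^{1/2}(r/\|\mu_j\|)/L^{1/2}(r)$, and the resulting integral is precisely $X_\kappa(\Delta)$ in (\ref{lim1}).

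The kernels $Q_r$ and $Q$ are symmetric and Hermitian, so by the isometry for multiple Wiener--It\^{o} integrals,
\[
\mathbf{E}\,\big|I_\kappa(Q_r)-I_\kappa(Q)\big|^2\le\kappa!\,\|Q_r-Q\|_{L_2(\mathbb{R}^{d\kappa})}^2,
\]
and it suffices to prove $\|Q_r-Q\|_{L_2}\to0$; this yields $L_2(\Omega)$-convergence, hence convergence in distribution. Writing $Q_r-Q=Q\cdot\big(\prod_{j=1}^\kappa L^{1/2}(r/\|\mu_j\|)/L^{1/2}(r)-1\big)$, the slowly varying property of $L$ drives the bracket to $0$ pointwise. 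To pass to the limit under the integral sign I would argue by dominated convergence: expanding the product telescopically and bounding each factor by Lemma~\ref{lem0} on $\{\|\mu_j\|\le1\}$ and by the Potter-type bound $L(tr)/L(r)\le C\max(t^{\delta},t^{-\delta})$, see \cite{bin}, on $\{\|\mu_j\|>1\}$, one dominates the integrand by a finite sum of terms $|K_\Delta(\mu_1+\cdots+\mu_\kappa)|^2\prod_{j=1}^\kappa\|\mu_j\|^{-(d-t_j)}$ with each $t_j\in[\alpha-\delta,\alpha+\delta]$. Because $\alpha\in(0,d/\kappa)$ forces $\kappa\alpha<d$ strictly, $\delta>0$ may be chosen so small that $t_j>0$ and $\sum_{j=1}^\kappa t_j<d$ for every such term, and then Lemma~\ref{finint} guarantees integrability of the dominating function.

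The main obstacle is precisely this dominated-convergence step: since $L$ need not converge to a constant, the ratios $L(r/\|\mu_j\|)/L(r)$ are not uniformly bounded and must be controlled uniformly in $r$ separately on the regions $\{\|\mu_j\|\le1\}$ and $\{\|\mu_j\|>1\}$, all while keeping the perturbed exponents $t_j$ within the range $\sum_j t_j<d$ where Lemma~\ref{finint} applies. Finally, the assertion concerns finite-dimensional distributions, i.e.\ the joint laws of $\big(X_{r,\kappa}(\Delta^{(1)}),\ldots,X_{r,\kappa}(\Delta^{(m)})\big)$ for test sets $\Delta^{(1)},\ldots,\Delta^{(m)}$; by the Cram\'{e}r--Wold device it suffices to treat an arbitrary linear combination, which is again a multiple Wiener--It\^{o} integral whose kernel is $c_2^{\kappa/2}(d,\alpha)\big(\sum_i a_iK_{\Delta^{(i)}}(\sum_j\mu_j)\big)\prod_j\big(L^{1/2}(r/\|\mu_j\|)/L^{1/2}(r)\big)\|\mu_j\|^{-(d-\alpha)/2}$, so the same $L_2$-estimate (with Lemma~\ref{finint} applied to the cross terms via the Cauchy--Schwarz inequality) closes the argument.
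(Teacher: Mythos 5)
The paper itself does not prove this theorem; it is imported from \cite{mink}. Your strategy is nonetheless exactly the one used there, and its machinery reappears verbatim in the proof of Theorem~\ref{th5}: the It\^{o} formula turns $H_\kappa(\eta(x))$ into a $\kappa$-fold Wiener--It\^{o} integral, stochastic Fubini and the self-similarity of the white noise combined with $K_{\Delta(r)}(\lambda)=r^dK_\Delta(r\lambda)$ give the representation $X_{r,\kappa}\stackrel{d}{=}c_2^{\kappa/2}(d,\alpha)\,I_\kappa\bigl(\hat K_\Delta\prod_j(L(r/\|\mu_j\|)/L(r))^{1/2}\bigr)$ (your power count is correct), and the isometry reduces the claim to $L_2$-convergence of deterministic kernels. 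The Cram\'{e}r--Wold remark is fine, and in fact superfluous: all coordinates are driven by the same $W$, so componentwise $L_2(\Omega)$-convergence already gives joint convergence.

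The one step that does not close as written is the domination. On $\{\|\mu_j\|>1\}$ you invoke the Potter bound $L(tr)/L(r)\le C\max(t^{\delta},t^{-\delta})$ with $t=1/\|\mu_j\|$, but Theorem~1.5.6 of \cite{bin} requires \emph{both} arguments $r$ and $r/\|\mu_j\|$ to exceed a threshold $X_0(\delta)$; when $\|\mu_j\|$ is of order $r$ or larger the argument $r/\|\mu_j\|$ falls below it, and Assumption~\ref{ass2} only makes $L$ locally bounded --- it is not assumed bounded away from $0$ and $\infty$ near the origin, and $L(r)$ itself may tend to $0$ or $\infty$ --- so the split at $\|\mu_j\|=1$ does not produce an $r$-uniform dominating function. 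The standard repair, which is exactly what the proof of Theorem~\ref{th5} does in the quantitative setting, is to truncate at $\max_j\|\mu_j\|\le r^{\gamma}$ for some $\gamma\in(0,1)$, so that $r/\|\mu_j\|\ge r^{1-\gamma}\to\infty$ and Lemma~\ref{lem0}/Potter apply on the truncated region, and then to estimate the complementary tail integral separately, e.g.\ via Lemma~\ref{finint} together with the spherical $L_2$-average decay of $K_\Delta$ from \cite{bra} (for the purely qualitative statement the integrability $u^{d-1}f(u)\in L_1(\mathbb{R}_+)$ also suffices to make the tail vanish). With that modification your argument is complete and coincides with the source's.
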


\begin{rem}
If $\kappa=1$ the limit $X_\kappa(\Delta)$ is Gaussian. However, for the case  $\kappa>1$  distributional properties of $X_\kappa(\Delta)$  are almost unknown. It was shown that the integrals in (\ref{lim1}) posses absolutely continuous densities, see \cite{Dav,Shi}. The article \cite{Main} proved that these densities are bounded if $\kappa=2.$ Also, for the Rosenblatt distribution, i.e. $\kappa=2$ and a rectangular  $\Delta$, the density and cumulative distribution functions of $X_\kappa(\Delta)$ were studied in \cite{Vei}. An approach to investigate the boundedness of densities of multiple Wiener-It\^{o} integrals was suggested in \cite{Dav}. However, it is difficult to apply this approach to the case $\kappa>2$ as it requires a classification of the peculiarities of general $n$th degree forms.
\end{rem}
\begin{definition}\label{kol}
	Let $Y_1$ and $Y_2$ be arbitrary random variables. The uniform (Kolmogorov)
	metric for the distributions of $Y_1$ and $Y_2$ is defined by the formula 
	\begin{equation*}
		{\rho}\left( Y_1,Y_2\right) =\underset{z\in \mathbb{R}}{\sup }\left| P\left(
		Y_1\leq z\right) -P\left( Y_2\leq z\right) \right| .
	\end{equation*}
\end{definition}

The following result follows from Lemma~1.8~\cite{pet}.

\begin{lemma}
	\label{lem1} If $X,Y$ and $Z$ are arbitrary random variables, then for any $%
	\varepsilon >0:$ 
	\begin{equation*}
		\rho \left( X+Y,Z\right) \leq {\rho }(X,Z)+\rho \left( Z+\varepsilon
		,Z\right) +P\left( \left\vert Y\right\vert \geq \varepsilon \right) .
	\end{equation*}
\end{lemma}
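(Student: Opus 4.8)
The goal is to prove that for arbitrary random variables $X$, $Y$, $Z$ and any $\varepsilon > 0$,
$$\rho(X+Y, Z) \le \rho(X, Z) + \rho(Z+\varepsilon, Z) + P(|Y| \ge \varepsilon).$$

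The plan is to work directly from the definition of the Kolmogorov metric by bounding $|P(X+Y \le z) - P(Z \le z)|$ uniformly in $z$. The natural strategy is to split the event $\{X+Y \le z\}$ according to whether $|Y| < \varepsilon$ or $|Y| \ge \varepsilon$, so that on the "good" event the perturbation $Y$ can be controlled by $\varepsilon$, reducing the comparison of $X+Y$ with $Z$ to a comparison of $X$ with $Z$ at a shifted threshold.

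First I would establish the upper bound on $P(X+Y \le z)$. On the event $\{|Y| < \varepsilon\}$ we have $X + Y \le z$ implies $X < z + \varepsilon$, so
$$P(X+Y \le z) \le P(X \le z+\varepsilon) + P(|Y| \ge \varepsilon).$$
Then I would insert $Z$ at the shifted point and telescope: writing
$$P(X \le z+\varepsilon) = \bigl[P(X \le z+\varepsilon) - P(Z \le z+\varepsilon)\bigr] + \bigl[P(Z \le z+\varepsilon) - P(Z \le z)\bigr] + P(Z \le z),$$
each bracket is controlled by $\rho(X,Z)$ and by $\rho(Z+\varepsilon, Z)$ respectively (using that $P(Z \le z+\varepsilon) = P(Z - \varepsilon \le z)$ so the middle bracket is the difference of distribution functions of $Z$ and $Z+\varepsilon$ at a common point, bounded by $\rho(Z+\varepsilon,Z)$). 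This yields
$$P(X+Y \le z) - P(Z \le z) \le \rho(X,Z) + \rho(Z+\varepsilon,Z) + P(|Y|\ge \varepsilon).$$

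For the matching lower bound I would symmetrically use the event $\{|Y| < \varepsilon\}$ on which $X+Y \le z$ is implied by $X \le z - \varepsilon$, giving $P(X \le z-\varepsilon) - P(|Y|\ge\varepsilon) \le P(X+Y \le z)$, and again telescope through $Z$ at the point $z-\varepsilon$. Taking the supremum over $z \in \mathbb{R}$ of the two-sided bound then gives the claim. The only mild subtlety is keeping the directions of the inequalities and the shift signs consistent when passing to the lower bound, and correctly identifying the middle telescoping term with $\rho(Z+\varepsilon, Z)$; there is no genuine obstacle here, since the result is essentially a smoothing inequality and the statement of Lemma~1.8 in \cite{pet} supplies the template. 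Indeed, because the excerpt states the lemma "follows from Lemma~1.8~\cite{pet}," the cleanest route is simply to invoke that reference directly after verifying the elementary splitting above.
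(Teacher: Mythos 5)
Your argument is correct: the splitting on $\{|Y|<\varepsilon\}$ versus $\{|Y|\ge\varepsilon\}$, the two-sided shift of the threshold by $\pm\varepsilon$, and the telescoping through $Z$ at the shifted point (identifying $P(Z\le z+\varepsilon)-P(Z\le z)$ with a value of the distribution-function difference bounded by $\rho(Z+\varepsilon,Z)$) together give exactly the stated bound. The paper offers no proof of its own beyond citing Lemma~1.8 of \cite{pet}, and what you have written is precisely the standard proof of that smoothing lemma, so your route coincides with the paper's.
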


\section{L\'{e}vy concentration functions for $X_{k}(\Delta )$}

\label{sec4} 

In this section, we will investigate some fine properties of probability distributions of Hermite-type random variables. These results will be used to derive upper bounds of ${\rho}\left(X_\kappa(\Delta)+\varepsilon,X_\kappa(\Delta)\right)$ in the next section. The following function from Section~1.5 \cite{pet} will be used in this section.

\begin{definition}\label{levy}
	The L\'{e}vy concentration function of a random variable $X$ is defined by
	\[Q(X,\varepsilon):=\sup\limits_{z\in\mathbb{R}}{\rm{P}}(z<X\leq z+\varepsilon),\quad \varepsilon\geq 0.\]
\end{definition}

We will discuss three important cases, and show how to estimate the L\'{e}vy concentration function in each of them. 

If $X_{k}(\Delta )$ has a bounded probability density function $p_{X_\kappa(\Delta)}\left(\cdot\right),$ then it holds

\begin{equation}\label{bd}
Q\left(X_\kappa(\Delta),\varepsilon\right) \le \varepsilon\sup_{z\in \mathbb R}\, p_{X_\kappa(\Delta)}\left( z\right)\le \varepsilon\, C.
\end{equation}

This inequality is probably the sharpest known estimator of the L\'{e}vy concentration function of $X_{k}(\Delta )$. It is discussed in cases 1 and 2.

\textit{\textbf{Case 1.}} If the Hermite rank of $G(\cdot)$ is equal to $\kappa = 2$ we are dealing with the so-called Rosenblatt-type random variable. It is known that the probability density function of this variable is bounded, consult \cite{Main, Dav, Dav1, anh, leonew} for proofs by different methods. Thus, one can use estimate (\ref{bd}).

\textit{\textbf{Case 2.}} Some interesting results about boundedness of probability density functions of Hermite-type random variables were obtained in \cite{Hu} by Malliavin calculus. To present these results we provide some definitions from Malliavin calculus.

Let $X = \{X(h), h\in L^2(\mathbb{R}^d)\}$ be an isonormal Gaussian process defined on a complete probability space $(\Omega ,\mathcal{F},P)$. Let $\mathbf{S}$ denote the class of smooth random variables of the form $F = f(X(h_1),\dots X(h_n))$, $n \in \mathbb{N}$, where $h_1,\dots, h_n$ are in $L^2(\mathbb{R}^d)$, and $f$ is a function, such that $f$ itself and all its partial derivatives have at most polynomial growth.

The Malliavin derivative $DF$ of $F = f(X(h_1),\dots X(h_n))$ is the $L^2(\mathbb{R}^d)$ valued random variable given by

\[DF = \sum\limits_{i=1}^n \frac{\partial f}{\partial x_i}(X(h_1),\dots X(h_n))h_i.\]

The derivative operator $D$ is a closable operator on $L_2(\Omega)$ taking values in $L_2(\Omega; L^2(\mathbb{R}^d))$.
By iteration one can define higher order derivatives $D^k F \in L_2(\Omega;L^2(\mathbb{R}^d)^{\odot k})$, where $\odot$ denotes the symmetric tensor product. For any integer $k \geq 0$ and any $p \geq 1$ we denote by $\mathbb{D}^{k,p}$ the closure of $\mathbf{S}$ with respect to the norm $\|\cdot\|_{k,p}$ given by

\[ \|F\|^{p}_{k,p} =\sum\limits_{i = 0}^{k}\mathbf{E}\left(\left\|D^i F\right\|_{L^2(\mathbb{R}^d)^{\otimes i}}^p\right).
\]

Let's denote by $\delta$ the adjoint operator of $D$ from a domain in $L_2(\Omega;L^2(\mathbb{R}^d))$ to $L_2(\Omega)$. An element $u \in L_2(\Omega;L^2(\mathbb{R}^d))$ belongs to
the domain of $\delta$ if and only if for any $F \in \mathbb{D}^{1,2}$ it holds
\[ \mathbf{E}[\langle DF, u\rangle] \leq c_u \sqrt{\mathbf{E}[F^2]},\]
where $c_u$ is a constant depending only on $u$.

The following theorem gives sufficient conditions to guarantee boundedness of Hermite-type densities.

\begin{theorem}{\rm\cite{Hu}}
	Let $F \in \mathbb{D}^{2,s}$ such that $\mathbf{E}[|F|^{2q}] < \infty$ and 
\begin{equation}\label{ml}
	\mathbf{E}\left[\left\|DF\right\|^{-2r}_{L^2(\mathbb{R}^d)}\right] < \infty,
\end{equation}
	where $q,r,s >1$ satisfying $\frac{1}{q} +\frac{1}{r}+\frac{1}{s} = 1$.
	
Denote $w = \left\|DF\right\|^{2}_{L^2(\mathbb{R}^d)}$ and  $u = w^{-1} DF$. Then $u \in \mathbb{D}^{1,q^\prime}$ with $q^\prime = \frac{q}{q-1}$ and $F$ has a density given by
	$p_F(x) = \mathbf{E}\left[\mathbf{1}_{F>x}\delta(u) \right]$. Furthermore, $p_F(x)$ is bounded and
	$p_F(x) \leq C_q \|w^{-1}\|_r\|F\|_{2,s}\min(1,|x^{-2}\|F\|^2_{2q}),$
	for any $x\in \mathbb{R}$, where $C_q$ is a constant depending only on $q$.
\end{theorem}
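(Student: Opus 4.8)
The plan is to follow the classical Malliavin-calculus route to density formulas, isolating the three-exponent integrability condition as the technical heart. First I would establish the regularity claim $u\in\mathbb{D}^{1,q'}$, which is what gives meaning to $\delta(u)$ and ultimately controls the density. Writing $w=\left\|DF\right\|^{2}_{L^2(\mathbb{R}^d)}$, the chain and product rules for the Malliavin derivative give $Du=-w^{-2}(Dw)\otimes DF+w^{-1}D^2F$, and since $Dw=2\langle DF,D^2F\rangle$ one obtains the pointwise bound $\|Du\|\le 3\,\|DF\|^{-2}\|D^2F\|$; similarly $\|u\|_{L^2(\mathbb{R}^d)}=\|DF\|^{-1}$.

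The crucial estimate is then
\[
\mathbf{E}\big[\|Du\|^{q'}\big]\le 3^{q'}\,\mathbf{E}\big[\|DF\|^{-2q'}\|D^2F\|^{q'}\big],
\]
to which I would apply H\"older's inequality with exponents $a=r/q'$ and $b=s/q'$ on the two factors. The point, and the reason the hypothesis $\tfrac1q+\tfrac1r+\tfrac1s=1$ is exactly right, is that $\tfrac{1}{a}+\tfrac{1}{b}=\tfrac{q'}{r}+\tfrac{q'}{s}=q'\big(1-\tfrac1q\big)=q'\cdot\tfrac{1}{q'}=1$, so H\"older applies and yields $(\mathbf{E}[\|Du\|^{q'}])^{1/q'}\le 3\,(\mathbf{E}[\|DF\|^{-2r}])^{1/r}(\mathbf{E}[\|D^2F\|^{s}])^{1/s}\le 3\,\|w^{-1}\|_r\|F\|_{2,s}$. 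Since $\tfrac1r<\tfrac1{q'}$ forces $r>q'$, the $L^{q'}$-norm of $u$ is likewise finite by Jensen's inequality, and the two contributions combine to give $\|u\|_{1,q'}\le C\,\|w^{-1}\|_r\|F\|_{2,s}<\infty$, with the derivative term producing the stated product. I expect this H\"older bookkeeping to be the main obstacle, the delicate point being the justification of the chain rule for $w^{-1}$: since $x\mapsto x^{-1}$ is not smooth at the origin, I would approximate it by functions vanishing near $0$ and pass to the limit using the negative-moment assumption~\eqref{ml}.

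Next I would derive the density formula itself. For $\psi\in C_c^\infty(\mathbb{R})$ set $\varphi(x)=\int_{-\infty}^x\psi(y)\,\mathrm{d}y$; the chain rule gives $\langle D(\varphi(F)),u\rangle=\varphi'(F)\langle DF,u\rangle=\psi(F)$, because $\langle DF,u\rangle=w^{-1}\|DF\|^2=1$. The duality between $D$ and $\delta$ recalled just before the theorem then yields $\mathbf{E}[\psi(F)]=\mathbf{E}[\langle D(\varphi(F)),u\rangle]=\mathbf{E}[\varphi(F)\delta(u)]$, and since $\varphi(F)=\int_{\mathbb{R}}\psi(y)\mathbf{1}_{\{F>y\}}\,\mathrm{d}y$, a Fubini argument (licensed by $\delta(u)\in L^{q'}\subset L^1$, guaranteed by the previous step together with the continuity of $\delta$) gives $\mathbf{E}[\psi(F)]=\int_{\mathbb{R}}\psi(y)\,\mathbf{E}[\mathbf{1}_{\{F>y\}}\delta(u)]\,\mathrm{d}y$, which identifies the density $p_F(x)=\mathbf{E}[\mathbf{1}_{\{F>x\}}\delta(u)]$.

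Finally, for boundedness I would invoke the continuity of the divergence, $\|\delta(u)\|_{q'}\le C_q\|u\|_{1,q'}\le C_q\|w^{-1}\|_r\|F\|_{2,s}$. The crude estimate $|p_F(x)|\le\mathbf{E}[|\delta(u)|]\le\|\delta(u)\|_{q'}$ gives the factor $1$ in the minimum. For the decaying branch, using $\mathbf{E}[\delta(u)]=0$ I would replace $\mathbf{1}_{\{F>x\}}$ by $-\mathbf{1}_{\{F\le x\}}$ when $x<0$, so that in either case $|p_F(x)|\le\mathbf{E}[\mathbf{1}_{\{|F|>|x|\}}|\delta(u)|]$; H\"older's inequality with exponents $q$ and $q'$ together with Markov's inequality $\mathrm{P}(|F|>|x|)\le |x|^{-2q}\|F\|_{2q}^{2q}$ then gives $|p_F(x)|\le |x|^{-2}\|F\|_{2q}^{2}\,\|\delta(u)\|_{q'}$. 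Combining the two bounds produces the claimed $\min\!\big(1,|x|^{-2}\|F\|_{2q}^2\big)$ factor and completes the proof.
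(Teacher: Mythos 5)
The paper does not prove this statement; it is quoted verbatim from the cited reference \cite{Hu} and used as a black box. Your reconstruction is correct and follows essentially the same route as that source: the H\"older bookkeeping with exponents $r/q'$ and $s/q'$ (which is exactly where $\tfrac1q+\tfrac1r+\tfrac1s=1$ enters), the duality identity $\mathbf{E}[\psi(F)]=\mathbf{E}[\varphi(F)\delta(u)]$ yielding $p_F(x)=\mathbf{E}[\mathbf{1}_{F>x}\delta(u)]$, and the $\min(1,|x|^{-2}\|F\|_{2q}^2)$ factor obtained from $\mathbf{E}[\delta(u)]=0$ plus H\"older and Markov, are all in order.
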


Note, that the Hermite-type random variable $X_\kappa(\Delta)$ does belong to the space $\mathbb{D}^{2,s}, s>1$, and $\mathbf{E}[|X_\kappa(\Delta)|^{2q}] < \infty$ by the hypercontractivity property, see (2.11) in \cite{Hu}. Thus, if  the condition~(\ref{ml}) holds, one can use (\ref{bd}). 

\textit{\textbf{Case 3.}} When there is no information about boundedness of the probability density function, anti-concentration inequalities can be used to obtain estimates of the L\'{e}vy concentration function. 

Let us denote by $I_\kappa(\cdot)$ a multiple Wiener-It\^{o} stochastic integral of order $d\kappa$, i.e. $I_\kappa(f) =\int_{\mathbb{R}^{d\kappa}}^{{%
		\prime }}f(\lambda _1,\cdots,\lambda _\kappa)W(\mathrm{d}%
\lambda _1)\ldots W(\mathrm{d}\lambda _\kappa),$ where $f(\cdot) \in {L}^{s}_{2}(\mathbb{R}^{d\kappa})$. Here ${L}^{s}_{2}(\mathbb{R}^{d\kappa})$ denotes the space of symmetrical functions in ${L}_{2}(\mathbb{R}^{d\kappa})$. Note, that any $F\in L_2(\Omega)$ can be represented as $F = \mathbf{E}(F) + \sum\limits_{q = 1}^\infty I_q(f_q)$, where the functions $f_q$ are determined by $F$. The multiple Wiener-It\^{o} integral $I_q(f_q)$ coincides with the orthogonal projection of $F$ on the $q$-th Wiener chaos associated with $X$. 

The following lemma uses the approach suggested in \cite{NourPol}.

\begin{lemma}\label{sb}
	For any $\kappa \in \mathbb{N}$, $t \in\mathbb{R}$, and $\hat{\varepsilon}>0$ it holds
	\[{\rm{P}}\left(|X_\kappa(\Delta)-t|\leq\hat{\varepsilon}\right)\leq \frac{c_{\kappa}\hat{\varepsilon}^{1/\kappa}}{\left( C\|\hat{K}_{\Delta}\|^{2}_{{L}_{2}(\mathbb{R}^{d\kappa})} + t^2\right)^{1/\kappa}},
	\]
	where $\hat{K}_\Delta(x_1,\dots,x_\kappa) := \frac{K_{\Delta}(x_1 + \dots + x_\kappa)}{\left\| \lambda _1\right\|
		^{(d-\alpha )/2}\cdots \left\| \lambda _\kappa\right\| ^{(d-\alpha )/2}}$ and $c_\kappa$ is a constant that depends on $\kappa$.
\end{lemma}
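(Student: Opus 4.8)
The plan is to use that $X_\kappa(\Delta)=c_2^{\kappa/2}(d,\alpha)\,I_\kappa(\hat K_\Delta)$ belongs to the $\kappa$-th Wiener chaos, so that, after fixing an orthonormal basis $\{e_i\}$ of $L_2$, it is an $L_2(\Omega)$-limit of polynomials of degree $\kappa$ in independent standard Gaussian random variables $X(e_i)$. For random variables of this type the appropriate small-ball tool is the Carbery--Wright inequality, which is exactly the device behind the approach of \cite{NourPol}: there is a constant $c_\kappa$, depending only on the degree $\kappa$ and not on the number of Gaussian variables, such that for every polynomial $Q$ of degree at most $\kappa$ in a standard Gaussian vector and every $\lambda>0$
\[
\mathrm P\big(|Q|\le\lambda\big)\le c_\kappa\,\frac{\lambda^{1/\kappa}}{\big(\mathbf E\,Q^2\big)^{1/(2\kappa)}}.
\]

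The key observation is to apply this not to $X_\kappa(\Delta)$ itself but to $Q:=X_\kappa(\Delta)-t$. Subtracting the constant $t$ does not raise the degree, so $Q$ is still of degree $\kappa$, while its second moment automatically produces the shift: since $X_\kappa(\Delta)$ is a mean-zero element of a chaos of order $\kappa\ge 1$,
\[
\mathbf E\,Q^2=\mathbf{Var}\,X_\kappa(\Delta)+t^2 .
\]
The isometry property of the multiple Wiener--It\^o integral gives $\mathbf{Var}\,X_\kappa(\Delta)=c_2^{\kappa}(d,\alpha)\,\kappa!\,\|\hat K_\Delta\|^2_{{L}_2(\mathbb R^{d\kappa})}$, hence with $C:=c_2^{\kappa}(d,\alpha)\,\kappa!$ the expression under the denominator is precisely $C\|\hat K_\Delta\|^2_{{L}_2(\mathbb R^{d\kappa})}+t^2$. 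Taking $\lambda=\hat\varepsilon$ then reproduces the announced bound; in particular the term $t^2$ is not added by hand but comes for free from the second moment of $X_\kappa(\Delta)-t$.

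The real work, and the step I expect to be the main obstacle, is that the Carbery--Wright inequality is finite-dimensional whereas $X_\kappa(\Delta)$ is not. I would resolve this by truncation: let $F_n$ denote the projection of $I_\kappa(\hat K_\Delta)$ obtained by restricting $\hat K_\Delta$ to $(\mathrm{span}\{e_1,\dots,e_n\})^{\otimes\kappa}$, so that $c_2^{\kappa/2}(d,\alpha)F_n-t$ is a genuine polynomial of degree $\kappa$ in $(X(e_1),\dots,X(e_n))$ and the Carbery--Wright constant $c_\kappa$ applies uniformly in $n$. Since $F_n\to I_\kappa(\hat K_\Delta)$ in $L_2(\Omega)$ we obtain the convergence in distribution $c_2^{\kappa/2}(d,\alpha)F_n\Rightarrow X_\kappa(\Delta)$ together with $\mathbf E[(c_2^{\kappa/2}(d,\alpha)F_n-t)^2]\to C\|\hat K_\Delta\|^2_{{L}_2(\mathbb R^{d\kappa})}+t^2$.

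It remains to push the uniform finite-dimensional estimates through the weak limit. Because the event $\{|\cdot-t|\le\hat\varepsilon\}$ is closed, the portmanteau theorem gives an inequality in the unfavourable direction, so I would instead bound $\mathrm P(|X_\kappa(\Delta)-t|\le\hat\varepsilon)\le\mathrm P(|X_\kappa(\Delta)-t|<\hat\varepsilon+\delta)$, apply the open-set form of the portmanteau theorem to the uniform Carbery--Wright bounds for $c_2^{\kappa/2}(d,\alpha)F_n-t$ with window $\hat\varepsilon+\delta$, and then let $\delta\downarrow0$; alternatively one can invoke the already known absolute continuity of $X_\kappa(\Delta)$ to replace the closed small ball by the open one at no cost. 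The only delicate points are the uniformity of $c_\kappa$ in the truncation dimension (which holds since it depends on $\kappa$ alone) and the convergence of the second moments, both of which are routine once the truncation scheme is in place.
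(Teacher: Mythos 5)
Your proposal follows essentially the same route as the paper: truncate $\hat K_\Delta$ to a finite-dimensional tensor so that $X_\kappa^n(\Delta)-t$ becomes a genuine degree-$\kappa$ polynomial in finitely many Gaussians, apply the Carbery--Wright inequality of \cite{NourPol} with its degree-only constant, identify $\mathbf{E}(X_\kappa(\Delta)-t)^2=C\|\hat K_\Delta\|^2_{L_2(\mathbb{R}^{d\kappa})}+t^2$ via the It\^{o} isometry, and pass to the limit. The only (immaterial) difference is the limiting step: the paper extracts an almost surely convergent subsequence and uses Fatou's lemma, picking up a factor $2^{1/\kappa}$, whereas you use weak convergence with an $\hat\varepsilon+\delta$ open window and the portmanteau theorem; both are valid.
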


\begin{proof}
	
	Let $\{e_i\}_{i\in \mathbb{N}}$ be an orthogonal basis of ${L}_{2}(\mathbb{R}^d)$.
	Then, $\hat{K}_\Delta \in {L}_{2}(\mathbb{R}^{d\kappa})$ can be represented as
	\[\hat{K}_{\Delta} = \sum\limits_{(i_1,\dots,i_{\kappa}) \in \mathbb{N}^\kappa} c_{i_1,\dots,i_\kappa}e_{i_1}\otimes\dots\otimes e_{i_\kappa}.
	\]
	For each $n\in \mathbb{N}$, set 
		\[\hat{K}^n_{\Delta} = \sum\limits_{(i_1,\dots,i_\kappa) \in \{1,\dots,n\}^\kappa} c_{i_1,\dots,i_\kappa}e_{i_1}\otimes\dots\otimes e_{i_\kappa}.
	\]
	
	Note, that both $\hat{K}_{\Delta}$ and $\hat{K}^n_{\Delta}$ belong to the space ${L}^{s}_{2}(\mathbb{R}^{d\kappa})$.
	
	By~(\ref{lim1}) it follows that $X_\kappa(\Delta) = c_2^{\kappa/2}(d,\alpha )I_\kappa(\hat{K}_{\Delta})$. Let us denote $X^n_\kappa(\Delta) :=c_2^{\kappa/2}(d,\alpha )I_\kappa(\hat{K}^n_{\Delta})$. 
		
	As $n\rightarrow\infty$, $\hat{K}^n_\Delta\rightarrow \hat{K}_\Delta$ in ${L}_{2}(\mathbb{R}^{d\kappa})$. Thus, $X^n_\kappa(\Delta) \rightarrow X_\kappa(\Delta)$ in $L_2(\Omega ,\mathcal{F},P)$. Hence, there exists a strictly increasing sequence ${n_j}$ for which 
		$X^{n_j}_\kappa(\Delta) \rightarrow X_\kappa(\Delta)$ almost surely as $j \rightarrow \infty$.
		
	It also follows that
	
	\[X^n_\kappa(\Delta) = c_2^{\kappa/2}(d,\alpha )I_\kappa\left(\sum\limits_{(i_1,\dots,i_\kappa)\in\{1,\dots,n\}^\kappa} c_{i_1,\dots,i_\kappa}e_{i_1}\otimes\dots\otimes e_{i_\kappa}\right)
	\]
	\[= c_2^{\kappa/2}(d,\alpha )\sum\limits_{m=1}^\kappa\sum\limits_{\lfrac{1\leq i_1^\prime< \dots<i_m^\prime\leq n}{\kappa_1+\dots+\kappa_m = \kappa}}^n c_{i_1^\prime,\dots,i_m^\prime}^{\kappa_1,\dots,\kappa_m}I_\kappa(e^{\otimes\kappa_1}_{i_1^\prime}\otimes\dots\otimes e^{\otimes\kappa_m}_{i_m^\prime}),\]
	where $\kappa_i\in\mathbb{N}$, $i=1,\dots,m$, $c_{i_1^\prime,\dots,i_m^\prime}^{\kappa_1,\dots,\kappa_m} = \sum\limits_{(i_1,\dots,i_\kappa)\in A_{i_1^\prime,\dots,i_m^\prime}^{\kappa_1,\dots,\kappa_m}}c_{i_1,\dots,i_\kappa},$
	and \\$A_{i_1^\prime,\dots,i_m^\prime}^{\kappa_1,\dots,\kappa_m} :=\{(i_1,\dots,i_\kappa)\in\{1,\dots,n\}^\kappa: \kappa_1 \ \text{indicies } i_l = i_1^\prime,\dots, \kappa_m \ \text{indicies }\\  i_l = i_m^\prime, l = 1,\dots,\kappa\}.$ 
	
	By the It\^{o} formula \cite{iv}:
	\[I_{\kappa_1+\dots+\kappa_m}\left(e_{i_1}^{\otimes\kappa_1}\otimes\dots\otimes e_{i_m}^{\otimes\kappa_m}\right) = \prod\limits_{j=1}^m H_{\kappa_j}\left(\int\limits_{\mathbb{R}^d}e_j(\lambda)W(\mathrm{d}%
		\lambda)\right)=
		\prod\limits_{j=1}^m H_{\kappa_j}(\xi_j),
	\]
	where $\xi_j \sim \mathcal{N}(0,1)$.
	
	Thus, $X^n_\kappa(\Delta)$ can be represented as $X^n_\kappa(\Delta)=U_{n,\kappa}(\hat{\varepsilon}_1,\dots,\hat{\varepsilon}_n),$
	where $U_{n,\kappa}(\cdot)$ is a polynomial of the degree at most $\kappa$. Furthermore,  $X^n_\kappa(\Delta) - t$ is also a polynomial of the degree at most $\kappa$.
	
	Now, applying Carbery-Wright inequality, see Theorem~2.5 \cite{NourPol}, one obtains that there exists a constant $\hat{c}_{\kappa}$ such that for any $n \in \mathbb{N}$ and $\hat{\varepsilon} > 0$
	\[{\rm{P}}\left(|X^n_\kappa(\Delta)-t|\leq\hat{\varepsilon} \left(\mathbf{E}\left(X^n_\kappa(\Delta)-t\right)^2\right)^{\frac{1}{2}}\right)\leq \hat{c}_{\kappa}\hat{\varepsilon}^{1/\kappa}.
	\]
	
	Analogously to \cite{NourPol}, using Fatou's lemma we get 
	
	\[{\rm{P}}\left(|X_\kappa(\Delta)-t|\leq\hat{\varepsilon} \left(\mathbf{E}\left(X_\kappa(\Delta)-t\right)^2\right)^{\frac{1}{2}}\right)\leq\hat{c}_{\kappa}2^{1/\kappa}\hat{\varepsilon}^{1/\kappa} = c_{\kappa}\hat{\varepsilon}^{1/\kappa}.
	\]
	It is known, see (1.3) and (1.5) in \cite{Houd}, that $\mathbf{E}X_\kappa(\Delta) = 0$ and $\mathbf{E}\left(X_\kappa(\Delta)\right)^2 = C\|\hat{K}_{\Delta}\|^2_{{L}_{2}(\mathbb{R}^{d\kappa})}$. Thus, the above inequality can be rewritten as
	\[{\rm{P}}\left(|X_\kappa(\Delta)-t|\leq\hat{\varepsilon}\right)\leq
	\frac{c_{\kappa}\hat{\varepsilon}^{1/\kappa}}{\left(\mathbf{E}\left(X_\kappa(\Delta)-t\right)^2\right)^{\frac{1}{2\kappa}}}=\frac{c_{\kappa}\hat{\varepsilon}^{1/\kappa}}{\left( C\|\hat{K}_{\Delta}\|^{2}_{{L}_{2}(\mathbb{R}^{d\kappa})} + t^2\right)^{1/\kappa}}.
	\]
\end{proof}

The following theorem combines all three cases above and provides an upper-bound estimator of the L\'{e}vy concentration function. 

\begin{theorem}\label{cmb}
	For any $\kappa \in \mathbb{N}$ and an arbitrary positive $\varepsilon$ it holds
	\[Q\left(X_\kappa(\Delta),\varepsilon\right)\leq C\varepsilon^{a},\]
	where the constant $a$ depends on the cases discussed above.
	
\end{theorem}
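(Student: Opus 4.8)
The plan is to treat the three cases separately and, in each, produce a power bound of the form $C\varepsilon^{a}$ with the exponent depending on which case applies; the claim that ``$a$ depends on the cases'' is then just the statement that these exponents differ.

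First I would dispatch Cases~1 and~2 together. In both situations the density $p_{X_\kappa(\Delta)}(\cdot)$ is bounded: in Case~1 this is the known boundedness of the Rosenblatt-type density $(\kappa=2)$, and in Case~2 it follows from the cited Malliavin-calculus theorem once condition~(\ref{ml}) is assumed. Hence inequality~(\ref{bd}) applies verbatim and yields $Q(X_\kappa(\Delta),\varepsilon)\le C\varepsilon$, i.e. the asserted bound with $a=1$.

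The substance of the argument lies in Case~3, where no boundedness of the density is available and one must instead invoke the anti-concentration estimate of Lemma~\ref{sb}. The key observation is that the L\'evy concentration function is built from half-open intervals $(z,z+\varepsilon]$, whereas Lemma~\ref{sb} controls the symmetric events $\{|X_\kappa(\Delta)-t|\le\hat\varepsilon\}$. I would bridge this by centering: for each $z$ set $t=z+\varepsilon/2$ and $\hat\varepsilon=\varepsilon/2$, so that $(z,z+\varepsilon]\subset\{|X_\kappa(\Delta)-t|\le\hat\varepsilon\}$ and therefore
\[
P(z<X_\kappa(\Delta)\le z+\varepsilon)\le P\bigl(|X_\kappa(\Delta)-t|\le\varepsilon/2\bigr)\le\frac{c_\kappa(\varepsilon/2)^{1/\kappa}}{\bigl(C\|\hat{K}_{\Delta}\|^{2}_{{L}_{2}(\mathbb{R}^{d\kappa})}+t^2\bigr)^{1/\kappa}}.
\]
Taking the supremum over $z\in\mathbb{R}$ amounts to taking the supremum over $t\in\mathbb{R}$ of the right-hand side; since the denominator is increasing in $t^2$, the supremum is attained at $t=0$, giving $Q(X_\kappa(\Delta),\varepsilon)\le c_\kappa(\varepsilon/2)^{1/\kappa}\bigl(C\|\hat{K}_{\Delta}\|^{2}_{{L}_{2}(\mathbb{R}^{d\kappa})}\bigr)^{-1/\kappa}=C\varepsilon^{1/\kappa}$, i.e. the asserted bound with $a=1/\kappa$.

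The only point requiring care is that the Case~3 bound is non-vacuous only if $\|\hat{K}_{\Delta}\|^{2}_{{L}_{2}(\mathbb{R}^{d\kappa})}$ is strictly positive and finite, so that the supremum over $t$ is genuinely finite and the absorbed constant is well defined. Finiteness follows from Lemma~\ref{finint} applied with $t_1=\dots=t_\kappa=\alpha$, using the standing restriction $\alpha\in(0,d/\kappa)$ which guarantees $\sum_i t_i=\kappa\alpha<d$; positivity is immediate since the integrand defining the norm is nonnegative and not identically zero. Collecting the three cases, the exponent is $a=1$ in Cases~1 and~2 and $a=1/\kappa$ in Case~3, which is exactly the claimed dependence of $a$ on the case.
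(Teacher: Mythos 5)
Your proof is correct and follows essentially the same route as the paper's: Cases~1 and~2 are dispatched by inequality~(\ref{bd}) with $a=1$, and Case~3 applies Lemma~\ref{sb} with exactly the same choice $t=z+\varepsilon/2$, $\hat\varepsilon=\varepsilon/2$ before taking the supremum over $z$, yielding $a=1/\kappa$. Your additional check that $\|\hat{K}_{\Delta}\|^{2}_{{L}_{2}(\mathbb{R}^{d\kappa})}$ is finite and strictly positive (via Lemma~\ref{finint} and $\kappa\alpha<d$) is a worthwhile detail the paper leaves implicit, but it does not change the argument.
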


\begin{proof}
    For cases 1 and 2 it is an immediate corollary of (4.1) and the boundedness of $p_{X_\kappa(\Delta)}(\cdot)$.
	
	For case 3, applying Lemma~\ref{sb} with $t = z + \frac{\varepsilon}{2}$ and $\hat{\varepsilon} = \frac{\varepsilon}{2}$ we get
	
	\[
	Q\left(X_\kappa(\Delta),\varepsilon\right) = \sup\limits_{z\in\mathbb{R}}{\rm{P}}\left(\left|X_\kappa( \Delta) - (z+\frac{\varepsilon}{2})\right|\leq \frac{\varepsilon}{2}\right)\]
	\[ \leq\sup\limits_{z\in\mathbb{R}}\left(\frac{c_{\kappa}\left(\frac{\varepsilon}{2}\right)^{1/\kappa}}{\left(C\|\hat{K}_{\Delta}\|^{2}_{{L}_{2}(\mathbb{R}^{d\kappa})} + \left(z+\frac{\varepsilon}{2}\right)^2\right)^{\frac{1}{2\kappa}}}\right)
	\leq \frac{c_{\kappa}\varepsilon^{1/\kappa}}{\left(2C\|\hat{K}_{\Delta}\|_{{L}_{2}(\mathbb{R}^{d\kappa})}\right)^{\frac{1}{\kappa}}} = C\varepsilon^{1/\kappa}.
	\]
\end{proof}

\begin{rem}
	Notice, that by Definitions~\ref{kol} and \ref{levy}
	
	\[Q(X_\kappa(\Delta),\varepsilon) = \sup\limits_{z\in\mathbb{R}}\left({\rm{P}}(X_\kappa(\Delta)\leq z+\varepsilon)-{\rm{P}}(X_\kappa(\Delta)\leq z)\right)\] \[=\sup\limits_{z\in\mathbb{R}}\left|{\rm{P}}(X_\kappa(\Delta)\leq z)-{\rm{P}}(X_\kappa(\Delta)+\varepsilon\leq z)\right| = {\rho}\left(X_\kappa(\Delta)+\varepsilon,X_\kappa(\Delta)\right).\]
\end{rem}
\section{Rate of convergence}

\label{sec5}

In this section we consider the case of Hermi\-te-type limit distributions in
Theorem~\ref{th2}. The main result describes the rate of convergence of $%
K_{r}$ to $X_{\kappa }(\Delta )$ when $r\rightarrow \infty .$ To prove it we
use some techniques and facts from \cite{bra,mink,anh}.

\begin{theorem}\label{th5}
	Let Assumptions~{\rm{\ref{ass1}}} and {\rm{\ref{ass2}}} hold and $H 
	\mathrm{rank}\,G=\kappa \in \mathbb{N}$.
	
	If $\tau \in \left(-\frac{d - \kappa\alpha}{2},0\right)$ then for any $%
	\varkappa<\frac{a}{2+a}\min\left(\frac{\alpha(d-\kappa\alpha)}{%
		d-(\kappa-1)\alpha},\varkappa_1\right)$ 
	\begin{equation*}
		{\rho}\left( \frac{\kappa!\,K_r}{C_\kappa\,r^{d-\frac{\kappa\alpha}{2}}L^{%
				\frac{\kappa}{2}}(r)},X_\kappa(\Delta)\right)=o (r^{-\varkappa}),\quad
		r\rightarrow \infty ,
	\end{equation*}
	where $a$ is a constant from Theorem~{\rm{\ref{cmb}}}, $C_\kappa$ is defined by {\rm{(\ref{herm})}}, and 
	\begin{equation*}
		\varkappa_1:=\min\left(-2\tau,\frac{1}{\frac{1}{d-2\alpha}+ \dots +\frac{1}{%
				d-\kappa\alpha}+ \frac{1}{d+1-\kappa\alpha}}\right).
	\end{equation*}
	
	If $\tau=0$ then 
	\begin{equation*}
		{\rho}\left( \frac{\kappa!\,K_r}{C_\kappa\,r^{d-\frac{\kappa\alpha}{2}}L^{%
				\frac{\kappa}{2}}(r)},X_\kappa(\Delta)\right)=g^{\frac{2}{3}}(r), \quad
		r\rightarrow \infty.
	\end{equation*}
\end{theorem}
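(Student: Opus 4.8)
The plan is to decompose the normalized functional into its leading Hermite term plus a remainder, control each piece in the Kolmogorov metric, and then exploit the Lévy concentration bound from Theorem~\ref{cmb} to convert an $L_2$-type estimate on the remainder into a rate. Write $\tilde{K}_r:=\frac{\kappa!\,K_r}{C_\kappa\,r^{d-\kappa\alpha/2}L^{\kappa/2}(r)}$ and observe that $K_r=\sum_{j\ge\kappa}\frac{C_j}{j!}\int_{\Delta(r)}H_j(\eta(x))\,\mathrm{d}x$ by the Hermite expansion~(\ref{herm}). The leading term, after normalization, is exactly $X_{r,\kappa}$ from Theorem~\ref{th2}, whose finite-dimensional distributions converge to $X_\kappa(\Delta)$. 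So I would set $X:=X_{r,\kappa}$ (suitably renormalized), let $Y$ be the normalized tail $\sum_{j>\kappa}$, and let $Z:=X_\kappa(\Delta)$, so that $\tilde{K}_r=X+Y$.

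First I would apply Lemma~\ref{lem1}, which gives, for any $\varepsilon>0$,
\[
\rho(X+Y,Z)\le\rho(X,Z)+\rho(Z+\varepsilon,Z)+P(|Y|\ge\varepsilon).
\]
The middle term is precisely the Lévy concentration function $Q(X_\kappa(\Delta),\varepsilon)$, which Theorem~\ref{cmb} bounds by $C\varepsilon^{a}$. The third term I would control by Chebyshev's inequality, $P(|Y|\ge\varepsilon)\le\varepsilon^{-2}\mathbf{E}Y^2$, so the whole bound becomes $\rho(X,Z)+C\varepsilon^a+\varepsilon^{-2}\mathbf{E}Y^2$. Optimizing over $\varepsilon$ by balancing $\varepsilon^a$ against $\varepsilon^{-2}\mathbf{E}Y^2$ gives $\varepsilon\asymp(\mathbf{E}Y^2)^{1/(2+a)}$ and a contribution of order $(\mathbf{E}Y^2)^{a/(2+a)}$. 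This is exactly where the exponent $\frac{a}{2+a}$ in the statement originates, so the entire problem reduces to (i) estimating $\rho(X,Z)$, the rate of convergence of the leading chaos to its limit, and (ii) estimating the variance of the tail $\mathbf{E}Y^2$.

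For step (i), the rate $\rho(X,Z)$ should be handled by comparing the spectral (multiple Wiener--It\^o) representations of $X_{r,\kappa}$ and $X_\kappa(\Delta)$. Their difference is a single $\kappa$th-order Wiener--It\^o integral whose kernel is the difference of the two (pre- and post-limit) spectral densities; its $L_2$-norm is controlled using Assumption~\ref{ass2} together with Lemma~\ref{lem0}, which quantifies how $L^{\kappa/2}(tr)/L^{\kappa/2}(r)$ deviates from $1$ via $g(r)h_\tau(t)t^\delta$. The finiteness of the governing integrals is guaranteed by Lemma~\ref{finint}, whose hypothesis $\sum t_i<d$ is what forces $\alpha\in(0,d/\kappa)$ and explains the appearance of the quantities $d-2\alpha,\dots,d-\kappa\alpha$ in the definition of $\varkappa_1$. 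Converting this $L_2$-bound on the leading term into a Kolmogorov bound again uses the anti-concentration estimate, producing another factor with exponent $\frac{a}{2+a}$; the competing rate $\frac{\alpha(d-\kappa\alpha)}{d-(\kappa-1)\alpha}$ should emerge from the slowly-varying-function correction in this leading term, while $\varkappa_1$ (in particular the $-2\tau$ and the harmonic-sum expression) should come from the joint decay of all the spectral integrals.

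The main obstacle is step (ii): bounding $\mathbf{E}Y^2=\mathbf{Var}\big(\sum_{j>\kappa}\tfrac{C_j}{j!}\int_{\Delta(r)}H_j(\eta(x))\,\mathrm{d}x\big)$ after normalization. Unlike the Stein--Malliavin results cited in the introduction, one cannot discard the higher chaoses; each term must be estimated and summed. Using the diagram/product formula (Remark following the definition of Hermite rank) the variance of the $j$th term is $\frac{C_j^2}{j!}\int_{\Delta(r)}\int_{\Delta(r)}B^j(\|x-y\|)\,\mathrm{d}x\,\mathrm{d}y$, which via~(\ref{dint}) becomes $\frac{C_j^2}{j!}|\Delta|^2r^{2d}\int_0^{\mathrm{diam}}B^j(z)\psi_{\Delta(r)}(z)\,\mathrm{d}z$; inserting Assumption~\ref{ass1}'s form $B(z)=z^{-\alpha}L(z)$ and the scaling of $\psi_{\Delta(r)}$ gives integrals with integrands $z^{-j\alpha}L^j(z)$. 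The delicate points are that for $j>\kappa$ these are summed over infinitely many chaoses and that the normalization is by $r^{d-\kappa\alpha/2}L^{\kappa/2}(r)$, so each higher term must be shown to be asymptotically negligible relative to the leading one at a quantified rate; Parseval's identity~(\ref{par}) is what makes the sum over $j$ of $C_j^2/j!$ finite and lets the tail be controlled uniformly. The $\tau=0$ case is separate because, by Remark~\ref{rem3}, $L$ can then diverge or vanish; there the optimization cannot be carried out the same way and the bound collapses to the rate $g^{2/3}(r)$ dictated directly by the remainder term $g(r)$ raised to the balanced power $\frac{a}{2+a}$ with $a=1$ (so that $\frac{1}{3}\cdot 2=\frac{2}{3}$), reflecting that only the slowly-varying correction, not a polynomial decay, survives.
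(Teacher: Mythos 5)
Your proposal follows essentially the same route as the paper's proof: the decomposition $K_r=K_{r,\kappa}+S_r$, two applications of Lemma~\ref{lem1} combined with Chebyshev's inequality and the anti-concentration bound of Theorem~\ref{cmb}, optimization over $\varepsilon$ yielding the exponent $\frac{a}{2+a}$, the isometry-based $L_2$ comparison of the spectral representations of $X_{r,\kappa}$ and $X_\kappa(\Delta)$ controlled via Lemma~\ref{lem0} and Lemma~\ref{finint}, and the variance bound for the higher-order chaoses via (\ref{dint}) and Parseval's identity (\ref{par}). The only slip is a misattribution of the two competing rates: $\frac{\alpha(d-\kappa\alpha)}{d-(\kappa-1)\alpha}$ actually arises from $\mathbf{Var}\,S_r$ (your step (ii)), while the $-2\tau$ in $\varkappa_1$ comes from the slowly-varying correction $g^2(r)$ on the bounded spectral region of the leading term and the harmonic sum from the unbounded region --- not the other way around as your sketch suggests.
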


\begin{rem}
	This theorem generalises the result for the Rosenblatt-type case ($\kappa =2$%
	) in \rmfamily\cite{Main} to Hermite-type asymptotics ($\kappa >2$). It
	also relaxes the assumptions on the spectral density used in \cite{Main},
	see Remarks \ref{rem1} - \ref{rem3}.
\end{rem}

\begin{proof} Since $H {\rm rank}\,G=\kappa,$ it follows that $K_r$ can be represented in the space of squared-integrable random variables $L_2(\Omega)$ as
	\[K_r = K_{r,\kappa}+S_r :=\frac{C_\kappa }{\kappa!}
	\int_{\Delta(r)}H_\kappa (\eta (x))\,\mathrm{d}x + \sum_{j\geq \kappa + 1}\frac{C_j}{j!}
	\int_{\Delta(r)}H_j (\eta (x))\,\mathrm{d}x,
	\]
	where $C_j $ are coefficients of the Hermite series (\ref{herm}) of the function $G(\cdot).$
	
	Notice that $\mathbf{E}K_{r,\kappa}=\mathbf{E}S_r=\mathbf{E} X_{\kappa}(\Delta)=0,$ and
	\[X_{r,\kappa}=\frac{\kappa!\,K_{r,\kappa}}{C_\kappa\,r^{d-\frac{\kappa\alpha}{2}}L^{\frac{\kappa}{2}}(r)}.\]
	It follows from Assumption~\ref{ass1} that $|L(u)/u^\alpha|=|B(u)|\le B(0)=1.$ Thus, by the proof of Theorem~4~\cite{mink}, 
	\[
		\mathbf{Var}\, S_{r} \leq |\Delta|^2r^{2d-(\kappa+1)\alpha}\sum_{j\geq \kappa+1}\frac{C_j ^2}{j !}
		\int_0^{diam\left\{ \Delta
			\right\}} z^{-(\kappa+1)\alpha} L^{\kappa+1}\left(rz\right)\psi _{\Delta}(z)dz
	\]
\begin{equation*}
	 \leq|\Delta|^2r^{2d-\kappa\alpha}L^{\kappa}(r)\sum_{j\geq \kappa+1}\frac{C_j ^2}{j !}
		\int_0^{diam\left\{ \Delta
			\right\}} z^{-\kappa\alpha} \frac{L^{\kappa}\left(rz\right)}{L^{\kappa}(r)}  \frac{L\left(rz\right)}{(rz)^{\alpha}}\psi _{\Delta}(z)\,\mathrm{d}z.\label{varq}
\end{equation*}

	We represent the integral in (\ref{varq}) as the sum of two integrals $I_1$ and $I_2$ with the ranges of integration $[0,r^{-\beta_1}]$ and $(r^{-\beta_1},diam\left\{ \Delta
	\right\}]$ respectively, where $\beta_1\in(0,1).$
	
	It follows from Assumption~\ref{ass1} that  $|L(u)/u^\alpha|=|B(u)|\le B(0)=1$ and we can estimate the first integral as 
	\[I_1\le\int_0^{r^{-\beta_1}} z^{-\kappa\alpha} \frac{L^{\kappa}\left(rz\right)}{L^{\kappa}(r)}  \psi _{\Delta}(z)\,\mathrm{d}z
	\le\left(\frac{\sup_{0\le s\le r}s^{\delta/\kappa}L\left(s\right)}{r^{\delta/\kappa}L(r)}\right)^{\kappa}\] \[\times\int_0^{r^{-\beta_1}} z^{-\delta}z^{-\kappa\alpha}  \psi _{\Delta}(z)\,\mathrm{d}z,
	\]
	where $\delta$ is an arbitrary number in $(0,\min(\alpha,d-\kappa\alpha)).$

	By Assumption~\ref{ass1} the function $L\left(\cdot\right)$ is locally bounded. By Theorem~1.5.3 in \cite{bin},  there exists $r_0>0$ and $C>0$ such that for all $r\ge r_0$
	\[\frac{\sup_{0\le s\le r}s^{\delta/2}L\left(s\right)}{r^{\delta/2}L(r)}\le C.\]
	
	Using (\ref{dint}) we obtain
	\[\int_0^{r^{-\beta_1}} z^{-\delta}z^{-\kappa\alpha}  \psi _{\Delta}(z)\,\mathrm{d}z\le\frac{C}{\left| \Delta \right| }\int_{0}^{r^{-\beta_1}}\tau^{d-\kappa\alpha-1-\delta}\,\mathrm{d}\tau=\frac{C\,r^{-\beta_1(d-\kappa\alpha -\delta)}}{(d-\kappa\alpha-\delta)\left|\Delta \right|}.
	\]
	
	Applying Theorem~1.5.3 \cite{bin}  we get
	\[I_2\le
	\frac{\sup_{r^{1-\beta_1}\le s\le r\cdot diam\left\{ \Delta
			\right\}}s^{\delta}L^{\kappa}\left(s\right)}{r^{\delta}L^{\kappa}(r)}\cdot \sup_{r^{1-\beta_1}\le s\le r\cdot diam\left\{ \Delta
		\right\}}\frac{L\left(s\right)}{s^\alpha}\]\[ \int_0^{diam\left\{ \Delta \right\}} z^{-(\delta+\kappa\alpha)}  \psi _{\Delta}(z)\,\mathrm{d}z\le C\cdot o(r^{-(\alpha-\delta)(1-\beta_1)}),
	\]
	when $r$ is sufficiently large.
	
	Notice that by (\ref{par})
	\[\sum_{j\geq \kappa + 1}\frac{C_j ^2}{j !}\le \int_{\mathbb R}G^2(w)\ \phi ( w )\,\mathrm{d}w< +\infty.\]
	
	Hence, for sufficiently large $r$
	\[
	\mathbf{Var}\, S_{r} \leq C\,r^{2d-\kappa\alpha}L^{\kappa}(r)\left( r^{-\beta_1(d-\kappa\alpha-\delta)}
	+
	o\left(r^{-(\alpha-\delta)(1-\beta_1)}\right)\right).
	\]
	Choosing $\beta_1=\frac{\alpha}{d-(\kappa - 1)\alpha}$ to minimize the upper bound we get
	\[
	\mathbf{Var}\, S_{r} \leq C r^{2d-\kappa\alpha}L^{\kappa}(r)r^{-\frac{\alpha(d-\kappa\alpha)}{d-(\kappa - 1)\alpha}+\delta}.
	\]
	
	It follows from Theorem~\ref{cmb} that 
	
	\[{\rho}\left(X_\kappa(\Delta)+\varepsilon,X_\kappa(\Delta)\right)\le C\varepsilon^a.\]
	
	Applying Chebyshev's inequality and Lemma~\ref{lem1} to $X=X_{r,\kappa}$,\linebreak $Y=\frac{\kappa!\,S_r}{C_\kappa\,r^{d-\frac{\kappa\alpha}{2}}L^{\frac{\kappa}{2}}(r)},$ and $Z=X_{\kappa}(\Delta),$ we get
	
	$${\rho}\left( \frac{\kappa!\,K_r}{C_\kappa\,r^{d-\frac{\kappa\alpha}{2}}L^{\frac{\kappa}{2}}(r)},X_\kappa(\Delta)\right)={\rho}\left( X_{r,\kappa}+\frac{\kappa!\,S_r}{C_\kappa\,r^{d-\frac{\kappa\alpha}{2}}L^{\frac{\kappa}{2}}(r)},X_\kappa(\Delta)\right)$$
	\[
	\le {\rho}\left(X_{r,\kappa},X_\kappa(\Delta)\right)+C\left(\varepsilon^a+ \varepsilon^{-2}\,r^{-\frac{\alpha(d-\kappa\alpha)}{d-(\kappa - 1)\alpha}+\delta}\right),
	\]
	for a sufficiently large $r.$
	
	Choosing $\varepsilon:=r^{-\frac{\alpha(d-\kappa\alpha)}{(2+a)(d-(\kappa - 1)\alpha)}}$ to minimize the second term we obtain
	\begin{equation}\label{up11}
		{\rho}\left( \frac{\kappa!\,K_r}{C_\kappa\,r^{d-\frac{\kappa\alpha}{2}}L^{\frac{\kappa}{2}}(r)},X_\kappa(\Delta)\right)\le {\rho}\left(X_{r,\kappa},X_\kappa(\Delta)\right)+C\,r^{\frac{-a\alpha(d-\kappa\alpha)}{(2+a)(d-(\kappa - 1)\alpha)}+\delta}.
	\end{equation}
	
	Applying Lemma~\ref{lem1} once again to $X=X_{\kappa}(\Delta),$ $Y=X_{r,\kappa}-X_{\kappa}(\Delta),$\\ and $Z=X_{\kappa}(\Delta)$ we obtain
	\begin{eqnarray}
		\rho\left(X_{r,\kappa},X_{\kappa}(\Delta) \right) &\leq &\varepsilon_1^a\,C
		+P\left\{ \left|X_{r,\kappa}-X_\kappa(\Delta)\right| \geq \varepsilon_1 \right\}  \notag \\
		&\leq &\varepsilon_1^a\,C +\varepsilon_1^{-2} \mathbf{Var}\left(X_{r,\kappa}-X_\kappa(\Delta)\right).
		\label{38}
	\end{eqnarray}
	
	Now we show how to estimate $\mathbf{Var}\left(X_{r,\kappa}-X_\kappa(\Delta)\right).$
	
	By the self-similarity of Gaussian white noise and  formula (2.1) \cite{Dob}
	\[X_{r,\kappa}\stackrel{\mathcal{D}}{=}
		c^{\frac{\kappa}{2}}_2(d,\alpha)\int_{\mathbb{R}^{\kappa d}}^{{\prime }}K_{\Delta}(\lambda _1+\dots+\lambda _\kappa)Q_r(\lambda _1,\dots,\lambda _\kappa)\]
	\[\times\frac{W(\mathrm{d}\lambda
			_1)\dots W(\mathrm{d}\lambda _\kappa)}{\left\| \lambda _1\right\| ^{(d-\kappa\alpha )/2}\dots
			\left\| \lambda _\kappa\right\| ^{(d-\kappa\alpha )/2}},
	\]
	where
	\[
	Q_r(\lambda _1,\dots,\lambda _\kappa): =r^{\frac{\kappa}{2}(\alpha-d)}L^{-\frac{\kappa}{2}}(r)\
	c_2^{-\frac{\kappa}{2}}(d,\alpha)  \left[ \prod_{i = 1}^{\kappa}\left\| \lambda _i\right\|^{d-\alpha} f\left( \frac{\left\| \lambda _i\right\| }r\right)\right] ^{1/2}.
	\]
	
	Notice that
	\[
	X_\kappa(\Delta) =c^{\frac{\kappa}{2}}_2(d,\alpha )  \int_{\mathbb{R}^{\kappa d}}^{{\prime }}K_{\Delta}(\lambda _1+ \dots +\lambda _\kappa) \frac{W(d\lambda _1) \dots W(d\lambda _\kappa)}{\left\| \lambda
		_1\right\| ^{(d-\alpha )/2} \dots \left\| \lambda _\kappa\right\| ^{(d-\alpha )/2}}.\]
	
	By the isometry property of multiple stochastic integrals
	
	\[R_r:=\frac{\mathbb{E}\left| X_{r,\kappa}-X_\kappa(\Delta)\right|^2}{c_2^{\kappa}(d,\alpha)}\]
	\[=\int_{\mathbb{R}^{\kappa d}}\frac{|K_{\Delta}(\lambda _1+ \dots +\lambda _\kappa)|^2\left(Q_r(\lambda_1,\dots,\lambda_\kappa)-1\right)^2}{\left\| \lambda _1\right\| ^{d-\alpha}\dots\left\| \lambda _\kappa\right\| ^{d-\alpha}}\,\mathrm{d}\lambda
	_1 \dots \,\mathrm{d}\lambda _\kappa. \]

	Let us rewrite the integral $R_r$ as the sum of two integrals $I_3$ and $I_4$ with the integration regions $A(r):=\{(\lambda _1,\dots,\lambda _\kappa)\in\mathbb{R}^{\kappa d}:\ \max\limits_{i = \overline{1,\kappa}}(||\lambda _i||)\le r^\gamma\}$ and $\mathbb{R}^{\kappa d}\setminus A(r)$ respectively, where $\gamma\in(0,1).$ Our intention is to use the monotone equivalence property of regularly varying functions in the regions~$A(r).$
	
	First we consider the case of $(\lambda _1,\dots\lambda _\kappa)\in A(r).$  By Assumption~\ref{ass2} and the inequality \[\left|\sqrt{\prod\limits_{i = 1}^{\kappa}x_i}-1\right|\le \sum\limits_{i = 1}^{\kappa}\left|x^{\frac{\kappa}{2}}_i - 1\right|\] we obtain
	
	\[
	|Q_r(\lambda _1,\dots, \lambda _2)-1| = \left|\sqrt{\prod\limits_{j = 1}^{\kappa}\frac{L\left( \frac{r}{\left\| \lambda_j\right\| }\right)}{L(r)}}-1\right| \le \sum\limits_{j = 1}^{\kappa}\left|\frac{L^{\frac{\kappa}{2}}\left( \frac{r}{\left\| \lambda_j\right\| }\right)}{L^{\frac{\kappa}{2}}(r)}-1\right|.\]

	By Lemma~\ref{lem0}, if $||\lambda _j||\in (1, r^\gamma),$ $j=\overline{1,\kappa},$ then for arbitrary $\delta_1 > 0$ and sufficiently large $r$ we get
	\[\left|1-\frac{L^{\frac{\kappa}{2}}\left( \frac{r}{\left\| \lambda _j\right\| }\right)}{L^{\frac{\kappa}{2}}(r)} \right|=\frac{L^{\frac{\kappa}{2}}\left( \frac{r}{\left\| \lambda _j\right\| }\right)}{L^{\frac{\kappa}{2}}(r)} \cdot\left|1-\frac{L^{\frac{\kappa}{2}}(r)}{L^{\frac{\kappa}{2}}\left( \frac{r}{\left\| \lambda _j\right\| }\right)} \right|\le C\,\frac{L^{\frac{\kappa}{2}}\left( \frac{r}{\left\| \lambda _j\right\| }\right)}{L^{\frac{\kappa}{2}}(r)}g\left( \frac{r}{\left\| \lambda _j\right\|}\right)\]
	\[\times\left\| \lambda _j\right\|^{\delta_1} h_{\tau}(\left\| \lambda _j\right\|)= C\,  \left\| \lambda _j\right\|^{\delta_1} h_{\tau}(\left\| \lambda _j\right\|)g(r)\frac{g\left( \frac{r}{\left\| \lambda _j\right\|}\right)}{g(r)}\left(\frac{L\left( \frac{r}{\left\| \lambda _j\right\| }\right)}{L(r)}\right)^{\frac{\kappa}{2}}.\]
	
	For any positive $\beta_2$ and $\beta_3$, applying Theorem~1.5.6 \cite{bin} to $g(\cdot)$ and $L(\cdot)$ and using the fact that $h_{\tau}\left(\frac{1}{t}\right) = -\frac{1}{t^{\tau}}h(t)$ we obtain
	\[ \left|1-\frac{L^{\frac{\kappa}{2}}\left( \frac{r}{\left\| \lambda _j\right\| }\right)}{L^{\frac{\kappa}{2}}(r)} \right|\le C\,  \left\| \lambda _j\right\|^{\delta_1 + \frac{\kappa\beta_2}{2} + \beta_3 } \left\| \lambda _j\right\|^{- \tau}h_{\tau}(\left\| \lambda _j\right\|)g(r)\]
	\begin{equation}\label{gr1}
		= C\,   \left\| \lambda _j\right\|^{\delta} h_{\tau}\left(\frac{1}{\left\| \lambda _j\right\|}\right)g(r). 
	\end{equation}

	By  Lemma~\ref{lem0} for $||\lambda _j||\le 1,$ $j=\overline{1,\kappa}$, and arbitrary $\delta > 0,$ we obtain
	\begin{equation}\label{gr2}
		\left|1-\frac{L^{\frac{\kappa}{2}}\left( \frac{r}{\left\| \lambda _j\right\| }\right)}{L^{\frac{\kappa}{2}}(r)} \right| \le C\,   \left\| \lambda _j\right\|^{-\delta} h_{\tau}\left(\frac{1}{\left\| \lambda _j\right\|}\right)g(r).
	\end{equation}
	
	Hence, by  (\ref{gr1}) and (\ref{gr2})
	\[
	|Q_r(\lambda _1,\dots \lambda _\kappa)-1|^2 \le k \sum\limits_{j = 1}^{\kappa}\left|\frac{L^{\frac{\kappa}{2}}\left( \frac{r}{\left\| \lambda_j\right\| }\right)}{L^{\frac{\kappa}{2}}(r)}-1\right|^{2}\] 
	\[\le C\sum\limits_{j = 1}^{\kappa} h^{2}_{\tau}\left(\frac{1}{\left\| \lambda _j\right\|}\right)g^{2}(r) \max\left(\left\| \lambda _j\right\|^{-\delta}, \left\| \lambda _j\right\|^{\delta}\right),
	\]
	for $(\lambda _1,\dots \lambda _\kappa)\in A(r)$.

	Notice, that in the case $\tau = 0$ for any  $\delta>0$ there exists $C>0$ such that \linebreak $h_0(x)=\ln(x)<Cx^{\delta},\,x \ge 1$, and $h_0(x)=\ln(x)<Cx^{-\delta},\,x < 1$. Hence, by Lemma~\ref{finint} for $-\tau \le \frac{d -\kappa\alpha}{2}$ we get
	\[\int\limits_{A(r)\cap [0,1]^{\kappa d}}\frac{ h^{2}_{\tau}\left(\frac{1}{\left\| \lambda _j\right\|}\right)\max\left(\left\| \lambda_j\right\|^{-\delta},\left\|\lambda_j\right\|^{\delta}\right)\left|K_{\Delta}\left(\sum\limits_{i=1}^{\kappa}\lambda_i\right)\right|^2\,\mathrm{d}\lambda_1 \dots \mathrm{d}\lambda _\kappa }{\left\| \lambda _1\right\| ^{d-\alpha}\dots\left\| \lambda _\kappa\right\| ^{d-\alpha}} < \infty.\]

	Therefore, we obtain for sufficiently large $r$
	\[I_3\le C\,g^{2}(r) \sum\limits_{j = 1}^{\kappa}\int\limits_{A(r)\cap \mathbb{R}^{\kappa d}}\frac{ h^{2}_{\tau}\left(\frac{1}{\left\| \lambda _j\right\|}\right)\cdot \max\left(\left\| \lambda _j\right\|^{-\delta}, \left\| \lambda _j\right\|^{\delta}\right) }{\left\| \lambda _1\right\| ^{d-\alpha}\dots\left\| \lambda _\kappa\right\| ^{d-\alpha}}\]
	\[
		\times|K_{\Delta}(\lambda _1+\dots \lambda _\kappa)|^2\,\mathrm{d}\lambda
		_1 \dots \mathrm{d}\lambda _\kappa\le C\,g^{2}(r) \int\limits_{A(r)\cap \mathbb{R}^{\kappa d}}\frac{ h^{2}_{\tau}\left(\frac{1}{\left\| \lambda _1\right\|}\right) }{\left\| \lambda _1\right\| ^{d-\alpha}\dots\left\| \lambda _\kappa\right\| ^{d-\alpha}}\] 
	\begin{equation}\label{del}
		\times\max\left(\left\| \lambda _1\right\|^{-\delta}, \left\| \lambda _1\right\|^{\delta}\right)|K_{\Delta}(\lambda _1+\dots \lambda _\kappa)|^2\,\mathrm{d}\lambda
		_1 \dots \mathrm{d}\lambda _\kappa \le C\,g^{2}(r).
	\end{equation}

	It follows from Assumption~\ref{ass2} and the specification  of the estimate (23)  in the proof of Theorem~5 \cite{mink} that for each positive $\delta$ there exists $r_0>0$ such that for all $r\ge r_0,$   $(\lambda _1,\dots,\lambda _\kappa)\in B_{(1,\mu_2,\dots,\mu_\kappa)}=\{(\lambda _1,\dots,\lambda _\kappa)\in\mathbb{R}^{\kappa d}: ||\lambda_j||\le 1,\ \mbox{if}\ \mu_j=-1,\ \mbox{and } ||\lambda_j||> 1, \ \mbox{if}\ \mu_j=1, j=\overline{1,k}\},$ and $\mu_j\in \{-1,1\},$ it holds
	\[
	\frac{|K_{\Delta}(\lambda _1+\dots +\lambda _\kappa)|^2\left(Q_r(\lambda _1,\dots\lambda _\kappa)-1\right)^2}{\left\| \lambda _1\right\| ^{d-\alpha}\dots
		\left\| \lambda _\kappa\right\| ^{d-\alpha}}\le \frac{C\, |K_{\Delta}(\lambda _1 +\dots+\lambda _\kappa)|^2}{\left\| \lambda _1\right\| ^{d-\alpha}
		\dots\left\| \lambda _\kappa\right\| ^{d-\alpha}}\]
	\[+C\,\frac{|K_{\Delta}(\lambda _1+\dots+\lambda _\kappa)|^2}{\left\| \lambda _1\right\| ^{d-\alpha-\delta}
		\left\| \lambda _2\right\| ^{d-\alpha-\mu_2\delta}\dots\left\| \lambda _\kappa\right\| ^{d-\alpha-\mu_\kappa\delta}}.
	\]
	
	Since the integrands are non-negative, we can estimate $I_4$ as it is shown below
	\[ I_4\le \kappa\int\limits_{\mathbb{R}^{(\kappa-1)d}}\int\limits_{||\lambda _1||> r^\gamma}\frac{|K_{\Delta}(\lambda _1+\dots+\lambda _\kappa)|^2\left(Q_r(\lambda_1,\dots,\lambda_\kappa)-1\right)^2\mathrm{d}\lambda
		_1 \dots\mathrm{d}\lambda _\kappa}{\left\| \lambda _1\right\| ^{d-\alpha}\dots\left\| \lambda _\kappa\right\| ^{d-\alpha}}\]
	\[ \le C\int\limits_{\mathbb{R}^{(\kappa-1)d}}\int\limits_{||\lambda _1||> r^\gamma}\frac{|K_{\Delta}(\lambda _1+\dots+\lambda _2)|^2\,\mathrm{d}\lambda
		_1 \dots\mathrm{d}\lambda _\kappa}{\left\| \lambda _1\right\| ^{d-\alpha}\dots\left\| \lambda _\kappa\right\| ^{d-\alpha}}\]
	\[+\,  C\sum\limits_{\lfrac{\mu_i\in\{0,1,-1\}}{i \in \overline{1,\kappa}}} \int\limits_{\mathbb{R}^{(\kappa-1)d}}\int\limits_{||\lambda _1||> r^\gamma}\frac{|K_{\Delta}(\lambda _1+\dots +\lambda _\kappa)|^2\mathrm{d}\lambda_1 \dots\mathrm{d}\lambda_\kappa}{\left\| \lambda _1\right\| ^{d-\alpha-\delta}\left\| \lambda _2\right\| ^{d-\alpha-\mu_2\delta}\dots\left\| \lambda _\kappa\right\| ^{d-\alpha-\mu_\kappa\delta}}\]
	\[
	\le C\max_{\lfrac{ \mu_i\in\{0,1,-1\}}{i \in \overline{2,\kappa} }}\int\limits_{\mathbb{R}^{(\kappa-1)d}}\int\limits_{||\lambda _1||> r^\gamma}|K_{\Delta}(\lambda_1 +\dots +\lambda _\kappa)|^2
	\]
	\begin{equation}\label{midest}
		\times\frac{\mathrm{d}\lambda_1 \dots\mathrm{d}\lambda_\kappa}{\left\| \lambda _1\right\| ^{d-\alpha-\delta}\left\|\lambda _2\right\| ^{d-\alpha-\mu_2\delta}\dots\left\| \lambda _\kappa\right\| ^{d-\alpha-\mu_\kappa\delta}}.
	\end{equation}
	Replacing $\lambda_1 + \lambda_2$ by $u$ we obtain
	\[I_4\le C\max_{\lfrac{ \mu_i\in\{0,1,-1\}}{i \in \overline{2,\kappa} }}\int\limits_{\mathbb{R}^{(\kappa-1)d}}\int\limits_{||\lambda _1||> r^\gamma}\frac{|K_{\Delta}(u + \lambda _3+\dots +\lambda _\kappa)|^2}{\left\| \lambda _1\right\| ^{d-\alpha-\delta}\left\|u - \lambda _1\right\| ^{d-\alpha-\mu_2\delta}}\]
	\[
	\times\frac{\mathrm{d}\lambda_1\mathrm{d}u\mathrm{d}\lambda_3 \dots\mathrm{d}\lambda_\kappa}{\left\| \lambda _3\right\| ^{d-\alpha-\mu_3\delta}\dots\left\| \lambda _\kappa\right\| ^{d-\alpha-\mu_\kappa\delta}}\le C\max\limits_{\lfrac{ \mu_i\in\{0,1,-1\}}{i \in \overline{2,\kappa} }} \int\limits_{\mathbb{R}^{(\kappa-1)d}}\frac{1}{\left\|u\right\| ^{d-2\alpha-(\mu_2+1)\delta}}\]
	\[\times\frac{|K_{\Delta}(u + \lambda _3+\dots +\lambda _\kappa)|^2}{\left\| \lambda _3\right\| ^{d-\alpha-\mu_3\delta}\dots\left\| \lambda _\kappa\right\| ^{d-\alpha-\mu_\kappa\delta}}\int\limits_{\|\lambda _1\|> \frac{r^\gamma}{\|u\|}}\frac{\mathrm{d}\lambda_1 \mathrm{d}u\,\mathrm{d}\lambda_3 \dots\mathrm{d}\lambda_\kappa}{\left\| \lambda _1\right\| ^{d-\alpha-\delta}\left\|\frac{u}{\left\|u\right\|} - \lambda _1\right\| ^{d-\alpha-\mu_2\delta}}.\]
	Taking into account that for $\delta\in (0,\min(\alpha,{d}/{\kappa}-\alpha))$
	\[\sup_{u\in\mathbb{R}^{d}\setminus\{0\}}\int_{\mathbb{R}^{d}}\frac{\,\mathrm{d}\lambda
		_1}{\left\| \lambda _1\right\| ^{d-\alpha-\delta}\left\|\frac{u}{\|u\|}- \lambda _1\right\| ^{d-\alpha-\mu_2\delta}}\le C,\]
	we obtain
	\[I_4\le C\max_{\lfrac{ \mu_i\in\{0,1,-1\}}{i \in \overline{3,\kappa} }}\int\limits_{\mathbb{R}^{(\kappa-2)d}}\left[\max_{\mu_2\in\{0,1,-1\}}\int\limits_{||u||\le r^\gamma_0}\frac{|K_{\Delta}(u + \lambda _3+\dots +\lambda _\kappa)|^2}{\left\|u\right\| ^{d-2\alpha-(\mu_2+1)\delta}}\right.\]
	\[\times\frac{\mathrm{d}\lambda_3 \dots\mathrm{d}\lambda_\kappa}{\left\| \lambda _3\right\| ^{d-\alpha-\mu_3\delta}\dots\left\| \lambda _\kappa\right\| ^{d-\alpha-\mu_\kappa\delta}}\left.\int\limits_{||\lambda _1||> r^{\gamma-\gamma_0}}\frac{\mathrm{d}\lambda_1 \mathrm{d}u}{\left\| \lambda _1\right\| ^{d-\alpha-\delta}\left\|\frac{u}{\left\|u\right\|} - \lambda _1\right\| ^{d-\alpha-\mu_2\delta}}\right.\]
	\[+\left.\max_{\mu_i\in\{0,1,-1\}}\int\limits_{||u||> r^\gamma_0}\frac{|K_{\Delta}(u + \lambda _3+\dots +\lambda _\kappa)|^2\mathrm{d}u\,\mathrm{d}\lambda_3 \dots\mathrm{d}\lambda_\kappa}{\left\|u\right\| ^{d-2\alpha-(\mu_2+1)\delta}\left\| \lambda _3\right\| ^{d-\alpha-\mu_3\delta}\dots\left\| \lambda _\kappa\right\| ^{d-\alpha-\mu_\kappa\delta}}\right],\]
	where $\gamma_0\in (0,\gamma).$
	
	By Lemma~\ref{finint}, there exists $r_0>0$ such that for all $r\ge r_0$ the first summand is bounded by
	\[C\max_{\mu_2\in\{0,1,-1\}}\int\limits_{||u||\le r^\gamma_0}\frac{|K_{\Delta}(u + \lambda _3+\dots +\lambda _\kappa)|^2\mathrm{d}u\mathrm{d}\lambda_3 \dots\mathrm{d}\lambda_\kappa}{\left\|u\right\| ^{d-2\alpha-(\mu_2+1)\delta}\left\| \lambda _3\right\| ^{d-\alpha-\mu_3\delta}\dots\left\| \lambda _\kappa\right\| ^{d-\alpha-\mu_\kappa\delta}}\]
	\[\times\int\limits_{||\lambda _1||> r^{\gamma-\gamma_0}}\frac{\mathrm{d}\lambda_1} {\left\| \lambda _1\right\| ^{2d-2\alpha-\delta-\mu_2\delta}}\le Cr^{-(\gamma-\gamma_0)(d-2\alpha-2\delta)}.\]
	
	Therefore, for sufficiently large $r,$
	\[I_4\le Cr^{-(\gamma-\gamma_0)(d-2\alpha-2\delta)}\]
	\[+C\max_{\lfrac{ \mu_i\in\{0,1,-1\}}{i \in \overline{3,\kappa} }} \int\limits_{\mathbb{R}^{(\kappa-2)d}}\int\limits_{||u||> r^{\gamma_0}}\frac{|K_{\Delta}(u + \lambda _3+\dots +\lambda _\kappa)|^2\mathrm{d}u\mathrm{d}\lambda_3 \dots\mathrm{d}\lambda_\kappa}{\left\|u\right\| ^{d-2\alpha-2\delta}\left\| \lambda _3\right\| ^{d-\alpha-\mu_3\delta}\dots\left\| \lambda _\kappa\right\| ^{d-\alpha-\mu_\kappa\delta}}.
	\]
	
	Notice that the second summand here coincides with \rm(\ref{midest}) if $\kappa$ is replaced by $\kappa-1$. Thus, we can repeat the above procedure $\kappa-2$ more times and get the result
	\[I_4\le Cr^{-(\gamma-\gamma_0)(d-2\alpha-2\delta)}+ \dots + Cr^{-(\gamma_{\kappa-3}-\gamma_{\kappa-2})(d-\kappa\alpha-\kappa\delta)}
	\]
	\begin{equation}\label{+}
		+ C\int_{\|u\|> r^{\gamma_{\kappa-2}}}
		\frac{|K_{\Delta}(u)|^2\,\mathrm{d}u}{\left\| u\right\| ^{d-\kappa\alpha-\kappa\delta}},
	\end{equation}
	where $\gamma>\gamma_0>\gamma_1>\dots>\gamma_{\kappa-2}.$
	
	By the spherical $L_2$-average decay rate of the Fourier transform \cite{bra} for $\delta<d+1-\kappa\alpha$ and sufficiently large $r$ we get the following estimate of the integral in (\ref{+})
	\[\int_{\|u\|> r^{\gamma_{\kappa-2}}}
	\frac{|K_{\Delta}(u)|^2\,\mathrm{d}u}{\left\| u\right\| ^{d-\kappa\alpha-\kappa\delta}}\le 
	C\int_{z> r^{\gamma_{\kappa-2}}}\int_{S^{d-1}}
	\frac{|K_{\Delta}(z\omega)|^2}{z^{1-\kappa\alpha-\kappa\delta}}\,\mathrm{d}\omega \mathrm{d}z\]
	\[\le C\int_{z> r^{\gamma_{\kappa-2}}} \frac{\,\mathrm{d}z}{z^{d+2-\kappa\alpha-\kappa\delta}}=
	C\,r^{-\gamma_{\kappa-2}(d+1-\kappa\alpha-\kappa\delta)} \]
	\begin{equation}\label{uppp}
		= C\,r^{-\left(\gamma_{\kappa-2}-\gamma_{\kappa-1}\right) (d+1-\kappa\alpha-\kappa\delta)},
	\end{equation}
	where $S^{d-1}:=\{x\in \mathbb{R}^{d}:\left\Vert x\right\Vert =1\}$  is a sphere
	of radius 1 in $\mathbb{R}^d$ and $\gamma_{\kappa-1}=0.$
	
	Now let's consider the case $\tau<0$. In this case by Theorem~1.5.6 \cite{bin} for any $\delta>0$ we can estimate $g(r)$ as follows 
	\begin{equation}\label{less0}
		g(r)\le C\,r^{\tau+\delta}.
	\end{equation}
	Combining estimates (\ref{up11}), (\ref{38}), (\ref{del}), (\ref{+}), (\ref{uppp}),(\ref{less0}) and choosing\\ $\varepsilon_1:=r^{-\beta},$  we obtain
	\[{\rho}\left( \frac{\kappa!\,K_r}{C_\kappa\,r^{d-\frac{\kappa\alpha}{2}}L^{\frac{\kappa}{2}}(r)},X_\kappa(\Delta)\right)\le C\left(r^{-\frac{a\alpha(d-\kappa\alpha)}{(2+a)(d-(\kappa-1)\alpha)}+\delta}+ r^{-a\beta} +
	r^{2\tau + 2\delta + 2\beta}\right.\]
	\[ + \left.r^{-(\gamma-\gamma_0)(d-2\alpha-2\delta)+2\beta}+ \dots + r^{-(\gamma_{\kappa-3}-\gamma_{\kappa-2})(d-\kappa\alpha-\kappa\delta)+2\beta}\right.\]
	\[\left.+ r^{-\left(\gamma_{\kappa-2}-\gamma_{\kappa-1}\right) (d+1-\kappa\alpha-\kappa\delta)+2\beta}\right).
	\]

	Therefore, for any  $\tilde\varkappa_1\in (0,\frac{2+a}{a}\varkappa_0)$ one can choose a sufficiently small $\delta>0$ such that 
	\begin{equation}\label{bou}
		{\rho}\left( \frac{\kappa!\,K_r}{C_\kappa\,r^{d-\frac{\kappa\alpha}{2}}L^{\frac{\kappa}{2}}(r)},X_\kappa(\Delta)\right)\le Cr^\delta\left(r^{-\frac{a\alpha(d-\kappa\alpha)}{(2+a)(d-(\kappa-1)\alpha)}}+ r^{-\frac{a\tilde\varkappa_1}{2+a}}\right),
	\end{equation}
	where  \[\varkappa_0:=\sup_{\stackrel{1>\gamma>\gamma_0>\dots>\gamma_{\kappa-1}=0}{\beta>0}}\min\left(a\beta,-2\tau-2\beta,(\gamma-\gamma_0)(d-2\alpha)-2\beta, \dots , \right.\]
	\[\left.(\gamma_{\kappa-3}-\gamma_{\kappa-2})(d-\kappa\alpha)-2\beta, \left(\gamma_{\kappa-2}-\gamma_{\kappa-1}\right) (d+1-\kappa\alpha)-2\beta\right).
	\]
	\begin{lemma}\label{max}
		Let $\mathbf{\Gamma} = \left\{\gamma = (\gamma_1,\dots,\gamma_{n+1}) \left|b=\gamma_0>\gamma_1>\dots>\gamma_{n+1}=0\right.\right\}$ and $\mathbf{x} = (x_0,\dots,x_n)\in \mathbb{R}^{n+1}_{+}$ be some fixed vector. 
		
		The function $G(\gamma) = \min\limits_{i}\left(\gamma_i-\gamma_{i+1}\right)x_i$ 
		reaches its maximum at \\ $\bar{\gamma}=(\bar{\gamma}_0,\dots,\bar{\gamma}_{n+1})\in \mathbf{\Gamma}$ such that for any $0 \le i \le n$ it holds
		\begin{equation}\label{midpr}
			\left(\bar{\gamma_i}-\bar{\gamma}_{i+1}\right)x_i = \left(\bar{\gamma}_{i+1} -\bar{\gamma}_{i+2}\right)x_{i+1}.
		\end{equation}
		
	\end{lemma}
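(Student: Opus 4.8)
The plan is to strip away the ordering constraint by passing to the successive gaps. Introduce $d_i := \gamma_i - \gamma_{i+1}$ for $i=0,\dots,n$. Since $\gamma_0 = b$ and $\gamma_{n+1}=0$ are fixed and the coordinates are strictly decreasing, the set $\mathbf{\Gamma}$ is in bijection with $\{(d_0,\dots,d_n) : d_i>0,\ \sum_{i=0}^n d_i = b\}$, the constraint following by telescoping from $\sum_{i=0}^n d_i = \gamma_0 - \gamma_{n+1} = b$. Under this change of variables $G(\gamma) = \min_{0\le i\le n} d_i x_i$, so it is enough to maximize $\min_i d_i x_i$ over the open simplex of gap vectors.

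The key step is a single averaging inequality. Because every $x_i>0$,
\[
b = \sum_{i=0}^n d_i = \sum_{i=0}^n \frac{d_i x_i}{x_i} \ge \Big(\min_{0\le j\le n} d_j x_j\Big)\sum_{i=0}^n \frac{1}{x_i},
\]
so $G(\gamma) \le b\big(\sum_{i=0}^n 1/x_i\big)^{-1} =: m^*$ for every admissible $\gamma$. Equality holds throughout this chain precisely when $d_i x_i = \min_j d_j x_j$ for each $i$, that is, when all the products $d_i x_i$ coincide.

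It remains to check the bound is attained. Here I would exhibit $d_i^* := m^*/x_i$. These are strictly positive and satisfy $\sum_{i=0}^n d_i^* = m^*\sum_{i=0}^n 1/x_i = b$, so the tuple recovered by $\bar\gamma_i = \sum_{k=i}^n d_k^*$ lies in $\mathbf{\Gamma}$ and yields $G(\bar\gamma) = m^*$. Hence the maximum of $G$ equals $m^*$ and is realized exactly at the point where all $d_i x_i$ are equal; rewriting $d_i x_i = d_{i+1}x_{i+1}$ for consecutive $i$ back in terms of $\gamma$ is precisely the stated relation~(\ref{midpr}).

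The substitution and the averaging inequality are routine, and I do not expect a genuine analytic obstacle: the optimizer turns out to be interior (all $d_i^*>0$), so no boundary analysis is required. The only point needing a little care is verifying that the equality condition in the averaging inequality is both necessary and sufficient for optimality, which follows because the common lower bound $\min_j d_j x_j$ is met simultaneously in every summand only when the products $d_i x_i$ are constant in $i$.
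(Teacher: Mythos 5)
Your proof is correct, but it takes a genuinely different route from the paper's. The paper argues by perturbation: it takes for granted a point $\bar{\gamma}$ satisfying (\ref{midpr}), notes that there $G(\bar{\gamma})$ equals the common value $\left(\bar{\gamma}_j-\bar{\gamma}_{j+1}\right)x_j$, and shows that any deviation must shrink at least one gap $\gamma_j-\gamma_{j+1}$ and hence strictly decrease the minimum. You instead pass to the gap variables $d_i=\gamma_i-\gamma_{i+1}$, prove the global bound $G(\gamma)\le b\left(\sum_{i=0}^n 1/x_i\right)^{-1}$ by the averaging inequality, and then exhibit the maximizer $d_i^*=m^*/x_i$ explicitly. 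Your version buys three things the paper's does not: it supplies the existence of a point of $\mathbf{\Gamma}$ satisfying (\ref{midpr}), which the paper's argument silently presupposes; it identifies the optimal value $m^*=b\left(\sum_i 1/x_i\right)^{-1}$ in closed form, which is exactly the quantity the paper needs immediately after the lemma when it computes $\gamma\left(\frac{1}{d-2\alpha}+\dots+\frac{1}{d+1-\kappa\alpha}\right)^{-1}$; and it gives uniqueness via the equality analysis in the averaging chain. The paper's argument is shorter but is really only a verification that the balanced point is a local (hence, by the exchange structure, global) maximum. Both are valid; yours is the more complete and self-contained treatment.
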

	
	\begin{proof}
		Let us show that any deviation of $\gamma$ from $\bar{\gamma}$ leads to a smaller result.
		Consider a vector $\hat{\gamma}$ such that for some $i \in \overline{1,n}$ and some $\varepsilon>0$ the following relation is true 
		\[\hat{\gamma_i}-\hat{\gamma}_{i+1} = \bar{\gamma_i}-\bar{\gamma}_{i+1} + \varepsilon.\] 
		Since $\sum\limits_{i=0}^{n}\hat{\gamma_i}-\hat{\gamma}_{i+1}= \hat{\gamma_0}-\hat{\gamma}_{n+1} = b$ we can conclude that there exist some $j \neq i,\, j \in \overline{1,n},$ and $\varepsilon_1>0$ such that $\hat{\gamma_j}-\hat{\gamma}_{j+1} = \bar{\gamma_j}-\bar{\gamma}_{j+1} - \varepsilon_1$. 
		
		Obviously, in this case 
		\[G(\hat{\gamma}) \le \left(\hat{\gamma_j}-\hat{\gamma}_{j+1}\right)x_j = \left(\bar{\gamma_j}-\bar{\gamma}_{j+1} - \varepsilon_1\right)x_j =\left(\bar{\gamma_j}-\bar{\gamma}_{j+1}\right)x_j - \varepsilon_1 x_j\]
		Since $\varepsilon_1 >0$ and $x_j >0$ it follows from \rm(\ref{midpr}) that
		\[G(\hat{\gamma}) \le \left(\bar{\gamma_j}-\bar{\gamma}_{j+1}\right)x_j - \varepsilon_1 x_j < \left(\bar{\gamma_j}-\bar{\gamma}_{j+1}\right)x_j = G(\bar{\gamma}).\]
		So it's clearly seen that any deviation from $\bar{\gamma}$ will yield a smaller result.
	\end{proof}
	
	Note, that for fixed $\gamma\in(0,1)$ by Lemma~\ref{max}
	\[\sup_{\gamma>\gamma_0>\dots>\gamma_{\kappa-1}=0}\min\left((\gamma-\gamma_0)(d-2\alpha), \dots ,(\gamma_{\kappa-3}-\gamma_{\kappa-2})(d-\kappa\alpha)\right.,\]
	\[\left. \left(\gamma_{\kappa-2}-\gamma_{\kappa-1}\right) (d+1-\kappa\alpha)\right)=\frac{\gamma}{\frac{1}{d-2\alpha}+ \dots +\frac{1}{d-\kappa\alpha}+ \frac{1}{d+1-\kappa\alpha}}
	\]
	and 
	\[\sup_{\gamma\in(0,1)}\frac{\gamma}{\frac{1}{d-2\alpha}+ \dots +\frac{1}{d-\kappa\alpha}+ \frac{1}{d+1-\kappa\alpha}}=\frac{1}{\frac{1}{d-2\alpha}+ \dots +\frac{1}{d-\kappa\alpha}+ \frac{1}{d+1-\kappa\alpha}}.\]
	
	Note that  $\varkappa_0=\sup_{\beta>0}\min\left(a\beta,
	\varkappa_1-2\beta\right)=\frac{a\varkappa_1}{2+a}.$
	
	Finally, from (\ref{bou}) for $\tilde\varkappa_1<\varkappa_1$ the first statement of the theorem follows. 
	
	Now let's consider the case $\tau = 0$.
	In this case by Theorem~1.5.6 \cite{bin} for any $s>0$ and sufficiently large $r$
	\begin{equation}\label{eq0}
		g(r)>r^{-s}.
	\end{equation}
	Combining estimates (\ref{up11}), (\ref{38}), (\ref{del}), (\ref{+}), (\ref{uppp}), replacing all powers of $r$ for $g^2(r)$ using (\ref{eq0}), and choosing $\varepsilon_1:=g^{\beta}(r),\,\beta\in(0,1)$  we obtain
	
	\[{\rho}\left( \frac{\kappa!\,K_r}{C_\kappa\,r^{d-\frac{\kappa\alpha}{2}}L^{\frac{\kappa}{2}}(r)},X_\kappa(\Delta)\right)\le C\left(g^2(r)+g^{\beta}(r)+ g^{2-2\beta}\right).\]
	
	Since $\sup\limits_{\beta\in(0,1)}\min(2,\beta,2-2\beta)=\frac{2}{3},$
	it follows that
	\[{\rho}\left( \frac{\kappa!\,K_r}{C_\kappa\,r^{d-\frac{\kappa\alpha}{2}}L^{\frac{\kappa}{2}}(r)},X_\kappa(\Delta)\right)\le Cg^{\frac{2}{3}}(r).\]
	This proves the second statement of the theorem.
\end{proof}

\begin{rem}
The upper bound on the rate of convergence in Theorem~\ref{th5} is given by  explicit formulae that are  easy to evaluate and analyse. For example, for fixed values of $\alpha$ and $\kappa$ it is simple to see that the upper bound for $\varkappa$ approaches $\frac{a}{2+a}\min\left(\alpha,-2\tau\right),$ when  $d \to +\infty.$ For fixed values of $d$ and $\kappa$ the upper bound  for $\varkappa$ is of the order of magnitude of $O(d-\kappa\alpha)$, when $\alpha \to d/\kappa.$ This result is expected as the value $\alpha = d/\kappa$ corresponds to the boundary where a phase transition between short- and long-range dependence occurs. 
\end{rem}

\section{Conclusion}
\label{sec6}

The rate of convergence to Hermite-type limit distributions in
non-central limit theorems was investigated. The results were obtained under
rather general assumptions on the spectral densities of the considered
random fields, that weaken the assumptions used in \cite{Main}. Similar to 
\cite{Main}, the direct probabilistic approach was used, which has, in our
view, an independent interest as an alternative to the methods in \cite%
{Bre1, NourPec1, NourPec2}.
Additionally, some fine properties of the probability distributions of Hermite-type random variables were investigated. Some special cases when their probability density functions
are bounded were discussed. New anti-concentration inequalities were derived for L\'{e}vy concentration functions. 

% web bibs loaded on 2016-06-16 07:01:40 (d69343e4d000301f795f38a408989f17854930af)


\begin{thebibliography}{30}
% pybtex-1.20. Style name=ims, version=2.91, label_style=original, sorting_style=complex, cfg=None, language=None.

\bibitem{Main}
\begin{barticle}[mr]
\bauthor{\bsnm{Anh},~\bfnm{Vo}\binits{V.}},
\bauthor{\bsnm{Leonenko},~\bfnm{Nikolai}\binits{N.}} \AND
\bauthor{\bsnm{Olenko},~\bfnm{Andriy}\binits{A.}}
(\byear{2015}).
\btitle{On the rate of convergence to {R}osenblatt-type distribution}.
\bjournal{J. Math. Anal. Appl.}
\bvolume{425}
\bpages{111--132}.
\bid{doi={10.1016/j.jmaa.2014.12.016}, issn={0022-247X}, mr={3299653}}
\end{barticle}
% NOT OUTPUTTED:
%   number = 1
%   doi = http://dx.doi.org/10.1016/j.jmaa.2014.12.016
%   fjournal = Journal of Mathematical Analysis and Applications
\endbibitem

\bibitem{Arc}
\begin{barticle}[mr]
\bauthor{\bsnm{Arcones},~\bfnm{Miguel~A.}\binits{M.~A.}}
(\byear{1994}).
\btitle{Limit theorems for nonlinear functionals of a stationary {G}aussian sequence of vectors}.
\bjournal{Ann. Probab.}
\bvolume{22}
\bpages{2242--2274}.
\bid{issn={0091-1798}, mr={1331224}}
\end{barticle}
% NOT OUTPUTTED:
%   url = http://links.jstor.org/sici?sici=0091-1798(199410)22:4<2242:LTFNFO>2.0.CO;2-L&origin=MSN
%   number = 4
%   coden = APBYAE
%   fjournal = The Annals of Probability
\endbibitem

\bibitem{ArrAz}
\begin{bmisc}[auto:parserefs-M02]
\bauthor{\bsnm{Arras},~\bfnm{B.}\binits{B.}},
\bauthor{\bsnm{Azmoodeh},~\bfnm{E.}\binits{E.}},
\bauthor{\bsnm{Poly},~\bfnm{G.}\binits{G.}} \AND
\bauthor{\bsnm{Swan},~\bfnm{Y.}\binits{Y.}}
(\byear{2016}).
\btitle{Stein's method on the second Wiener chaos: 2-Wasserstein distance}.
\bnote{arXiv:1601.03301}.
\end{bmisc}

\bibitem{bin}
\begin{bbook}[mr]
\bauthor{\bsnm{Bingham},~\bfnm{N.~H.}\binits{N.~H.}},
\bauthor{\bsnm{Goldie},~\bfnm{C.~M.}\binits{C.~M.}} \AND
\bauthor{\bsnm{Teugels},~\bfnm{J.~L.}\binits{J.~L.}}
(\byear{1987}).
\btitle{Regular Variation}.
\bseries{Encyclopedia of Mathematics and Its Applications}
\bvolume{27}.
\bpublisher{Cambridge Univ. Press},
\blocation{Cambridge}.
\bid{doi={10.1017/CBO9780511721434}, mr={0898871}}
\end{bbook}
% NOT OUTPUTTED:
%   doi = http://dx.doi.org/10.1017/CBO9780511721434
%   isbn = 0-521-30787-2
%   fpage = xx+491
\endbibitem

\bibitem{bra}
\begin{barticle}[mr]
\bauthor{\bsnm{Brandolini},~\bfnm{L.}\binits{L.}},
\bauthor{\bsnm{Hofmann},~\bfnm{S.}\binits{S.}} \AND
\bauthor{\bsnm{Iosevich},~\bfnm{A.}\binits{A.}}
(\byear{2003}).
\btitle{Sharp rate of average decay of the {F}ourier transform of a bounded set}.
\bjournal{Geom. Funct. Anal.}
\bvolume{13}
\bpages{671--680}.
\bid{doi={10.1007/s00039-003-0426-7}, issn={1016-443X}, mr={2006553}}
\end{barticle}
% NOT OUTPUTTED:
%   number = 4
%   doi = http://dx.doi.org/10.1007/s00039-003-0426-7
%   coden = GFANFB
%   fjournal = Geometric and Functional Analysis
\endbibitem

\bibitem{Bre1}
\begin{barticle}[mr]
\bauthor{\bsnm{Breton},~\bfnm{Jean-Christophe}\binits{J.-C.}}
(\byear{2011}).
\btitle{On the rate of convergence in non-central asymptotics of the {H}ermite variations of fractional {B}rownian sheet}.
\bjournal{Probab. Math. Statist.}
\bvolume{31}
\bpages{301--311}.
\bid{issn={0208-4147}, mr={2853680}}
\end{barticle}
% NOT OUTPUTTED:
%   number = 2
%   fjournal = Probability and Mathematical Statistics
\endbibitem

\bibitem{Bre2}
\begin{barticle}[mr]
\bauthor{\bsnm{Breton},~\bfnm{Jean-Christophe}\binits{J.-C.}} \AND
\bauthor{\bsnm{Nourdin},~\bfnm{Ivan}\binits{I.}}
(\byear{2008}).
\btitle{Error bounds on the non-normal approximation of {H}ermite power variations of fractional {B}rownian motion}.
\bjournal{Electron. Commun. Probab.}
\bvolume{13}
\bpages{482--493}.
\bid{doi={10.1214/ECP.v13-1415}, issn={1083-589X}, mr={2447835}}
\end{barticle}
% NOT OUTPUTTED:
%   doi = http://dx.doi.org/10.1214/ECP.v13-1415
%   fjournal = Electronic Communications in Probability
\endbibitem

\bibitem{Dav}
\begin{barticle}[mr]
	\bauthor{\bsnm{Davydov},~\bfnm{Yu.~A.}\binits{Yu.~A.}}
	(\byear{1991}).
	\btitle{On distributions of multiple Wiener-It\^{o} integrals}.
	\bjournal{Theory Probab. Appl.}
	\bvolume{35}
	\bpages{27--37}.
	\bid{doi={10.1137/1135003}, issn={0040-361X}, mr={1050053}}
\end{barticle}
% NOT OUTPUTTED:
%   number = 1
%   doi = http://dx.doi.org/10.1137/1135003
\endbibitem

\bibitem{Dav1}
\begin{barticle}[mr]
	\bauthor{\bsnm{Davydov},~\bfnm{Yu.~A.}\binits{Yu.~A.}} \AND
	\bauthor{\bsnm{Manukyan},~\bfnm{R.~R.}\binits{R.~R.}}
	(\byear{1996}).
	\btitle{A local limit theorem for multiple Wiener-It\^{o} stochastic integrals.}.
	\bjournal{Theory Probab. Appl.}
	\bvolume{40}
	\bpages{354--361}.
	\bid{mr={1346478}}
\end{barticle}
\endbibitem


\bibitem{Dob}
\begin{barticle}[mr]
\bauthor{\bsnm{Dobrushin},~\bfnm{R.~L.}\binits{R.~L.}} \AND
\bauthor{\bsnm{Major},~\bfnm{P.}\binits{P.}}
(\byear{1979}).
\btitle{Non-central limit theorems for nonlinear functionals of {G}aussian fields}.
\bjournal{Z. Wahrsch. Verw. Gebiete}
\bvolume{50}
\bpages{27--52}.
\bid{doi={10.1007/BF00535673}, issn={0044-3719}, mr={0550122}}
\end{barticle}
% NOT OUTPUTTED:
%   number = 1
%   doi = http://dx.doi.org/10.1007/BF00535673
%   fjournal = Zeitschrift f\"ur Wahrscheinlichkeitstheorie und Verwandte Gebiete
\endbibitem

\bibitem{Surg}
\begin{barticle}[mr]
\bauthor{\bsnm{Giraitis},~\bfnm{L.}\binits{L.}} \AND
\bauthor{\bsnm{Surgailis},~\bfnm{D.}\binits{D.}}
(\byear{1985}).
\btitle{CLT and other limit theorems for functionals of {G}aussian processes}.
\bjournal{Z. Wahrsch. Verw. Gebiete}
\bvolume{70}
\bpages{191--212}.
\bid{doi={10.1007/BF02451428}, issn={0044-3719}, mr={0799146}}
\end{barticle}
% NOT OUTPUTTED:
%   number = 2
%   doi = http://dx.doi.org/10.1007/BF02451428
%   coden = ZWVGAA
%   fjournal = Zeitschrift f\"ur Wahrscheinlichkeitstheorie und Verwandte Gebiete
\endbibitem

\bibitem{gub}
\begin{barticle}[mr]
	\bauthor{\bsnm{Gubner},~\bfnm{John}\binits{J.}}
	(\byear{2005}).
	\btitle{Theorems and fallacies in the theory of long-range-dependent
		processes}.
	\bjournal{IEEE Trans. Inform. Theory}
	\bvolume{51}
	\bpages{1234--1239}.
	\bid{doi={10.1109/TIT.2004.842768}, issn={0018-9448}, mr={2237996}}
\end{barticle}
% NOT OUTPUTTED:
%   number = 3
%   doi = http://dx.doi.org/10.1109/TIT.2004.842768
%   fjournal = Institute of Electrical and Electronics Engineers.Transactions on Information Theory
\endbibitem

\bibitem{Houd}
\begin{bbook}[mr]
	\bauthor{\bsnm{Houdr{\'e}},~\bfnm{Christian}\binits{C.}}
	\bauthor{\bsnm{P{\'e}rez-Abreu},~\bfnm{Victor}\binits{V.}}
	(\byear{1994}).
	\btitle{Chaos Expansions, Multiple {W}iener-{I}t\^o Integrals and Their Applications}.
	\bseries{Probability and Stochastics Series}.
	\bpublisher{CRC Press},
	\blocation{Boca Raton, FL}.
	\bid{mr={1278035}}
\end{bbook}
% NOT OUTPUTTED:
%   isbn = 0-8493-8072-3
%   fpage = xiv+377
%   Papers from the workshop held in Guanajuato, July 27--31, 1992
\endbibitem

\bibitem{Hu}
\begin{barticle}[mr]
	\bauthor{\bsnm{Hu},~\bfnm{Yaozhong}\binits{Y.}},
	\bauthor{\bsnm{Lu},~\bfnm{Fei}\binits{F.}} \AND
	\bauthor{\bsnm{Nualart},~\bfnm{David}\binits{D.}}
	(\byear{2014}).
	\btitle{Convergence of densities of some functionals of Gaussian processes}.
	\bjournal{J. Funct. Anal.}
	\bvolume{266}
	\bpages{814–-875}.
	\bid{mr={3132731}}
\end{barticle}
\endbibitem

\bibitem{iv}
\begin{bbook}[mr]
\bauthor{\bsnm{Ivanov},~\bfnm{A.~V.}\binits{A.~V.}} \AND
\bauthor{\bsnm{Leonenko},~\bfnm{N.~N.}\binits{N.~N.}}
(\byear{1989}).
\btitle{Statistical Analysis of Random Fields}.
\bseries{Mathematics and Its Applications (Soviet Series)}
\bvolume{28}.
\bpublisher{Kluwer Academic},
\blocation{Dordrecht}.
\bid{doi={10.1007/978-94-009-1183-3}, mr={1009786}}
\end{bbook}
% NOT OUTPUTTED:
%   doi = http://dx.doi.org/10.1007/978-94-009-1183-3
%   isbn = 90-277-2800-3
%   fpage = x+244
%   With a preface by A. V. Skorokhod, Translated from the Russian by A. I. Kochubinski{\u\i}.
\endbibitem

\bibitem{KimPark}
\begin{barticle}[mr]
\bauthor{\bsnm{Kim},~\bfnm{Yoon~Tae}\binits{Y.~T.}} \AND
\bauthor{\bsnm{Park},~\bfnm{Hyun~Suk}\binits{H.~S.}}
(\byear{2015}).
\btitle{Kolmogorov distance for the central limit theorems of the {W}iener chaos expansion and applications}.
\bjournal{J. Korean Statist. Soc.}
\bvolume{44}
\bpages{565--576}.
\bid{doi={10.1016/j.jkss.2015.03.003}, issn={1226-3192}, mr={3421893}}
\end{barticle}
% NOT OUTPUTTED:
%   number = 4
%   doi = http://dx.doi.org/10.1016/j.jkss.2015.03.003
%   fjournal = Journal of the Korean Statistical Society
\endbibitem


\bibitem{Leon}
\begin{barticle}[mr]
\bauthor{\bsnm{Leonenko},~\bfnm{N.~N.}\binits{N.~N.}}
(\byear{1988}).
\btitle{On the accuracy of the normal approximation of functionals of strongly correlated {G}aussian random fields}.
\bjournal{Math. Notes.}
\bvolume{43}
\bpages{161--171}.
\bid{doi={10.1007/BF01152556}, issn={0025-567X}, mr={0939529}}
\end{barticle}
% NOT OUTPUTTED:
%   number = 2
%   doi = http://dx.doi.org/10.1007/BF01152556
\endbibitem

\bibitem{anh}
\begin{barticle}[mr]
\bauthor{\bsnm{Leonenko},~\bfnm{N.~N.}\binits{N.~N.}} \AND
\bauthor{\bsnm{Anh},~\bfnm{Vo}\binits{V.}}
(\byear{2001}).
\btitle{Rate of convergence to the {R}osenblatt distribution for additive functionals of stochastic processes with long-range dependence}.
\bjournal{J. Appl. Math. Stochastic Anal.}
\bvolume{14}
\bpages{27--46}.
\bid{doi={10.1155/S1048953301000041}, issn={1048-9533}, mr={1825099}}
\end{barticle}
% NOT OUTPUTTED:
%   number = 1
%   doi = http://dx.doi.org/10.1155/S1048953301000041
%   fjournal = Journal of Applied Mathematics and Stochastic Analysis
%   Special issue: Advances in applied stochastics.
\endbibitem

\bibitem{leoole}
\begin{barticle}[mr]
	\bauthor{\bsnm{Leonenko},~\bfnm{N.~N.}\binits{N.~N.}} \AND
    \bauthor{\bsnm{Olenko},~\bfnm{Andriy}\binits{A.}}
	(\byear{2013}).
	\btitle{Tauberian and {A}belian theorems for long-range dependent
		random fields}.
	\bjournal{Methodol. Comput. Appl. Probab. }
	\bvolume{15}
	\bpages{715--742}.
	\bid{doi={10.1007/s11009-012-9276-9}, issn={1387-5841}, mr={3117624}}
\end{barticle}
% NOT OUTPUTTED:
%   number = 4
%   doi = http://dx.doi.org/10.1007/s11009-012-9276-9
%   fjournal = Methodology and Computing in Applied Probability
\endbibitem

\bibitem{mink}
\begin{barticle}[mr]
	\bauthor{\bsnm{Leonenko},~\bfnm{Nikolai}\binits{N.}} \AND
	\bauthor{\bsnm{Olenko},~\bfnm{Andriy}\binits{A.}}
	(\byear{2014}).
	\btitle{Sojourn measures of {S}tudent and {F}isher-{S}nedecor random fields}.
	\bjournal{Bernoulli}
	\bvolume{20}
	\bpages{1454--1483}.
	\bid{doi={10.3150/13-BEJ529}, issn={1350-7265}, mr={3217450}}
\end{barticle}
% NOT OUTPUTTED:
%   number = 3
%   doi = http://dx.doi.org/10.3150/13-BEJ529
%   fjournal = Bernoulli. Official Journal of the Bernoulli Society for Mathematical Statistics and Probability
\endbibitem

\bibitem{leonew}
\begin{barticle}[mr]
	\bauthor{\bsnm{Leonenko},~\bfnm{N.~N.}\binits{N.~N.}},
	\bauthor{\bsnm{Ruiz-Medina},~\bfnm{M.~D.}\binits{M.~D.}} \AND
	\bauthor{\bsnm{Taqqu},~\bfnm{M.~S.}\binits{M.~S.}}
	(\byear{2017}).
	\btitle{Rosenblatt distribution subordinated to Gaussian random fields with long-range dependence}.
	\bjournal{Stoch. Anal. Appl.}
	\bvolume{35}
	\bpages{144--177}.
	\bid{mr={3581700}}
\end{barticle}
% NOT OUTPUTTED:
%   number = 1
\endbibitem

\bibitem{marpec}
\begin{bbook}[mr]
\bauthor{\bsnm{Marinucci},~\bfnm{Domenico}\binits{D.}} \AND
\bauthor{\bsnm{Peccati},~\bfnm{Giovanni}\binits{G.}}
(\byear{2011}).
\btitle{Random Fields on the Sphere}.
\bseries{London Mathematical Society Lecture Note Series}
\bvolume{389}.
\bpublisher{Cambridge Univ. Press},
\blocation{Cambridge}.
\bid{doi={10.1017/CBO9780511751677}, mr={2840154}}
\end{bbook}
% NOT OUTPUTTED:
%   doi = http://dx.doi.org/10.1017/CBO9780511751677
%   isbn = 978-0-521-17561-6
%   fpage = xii+341
%   Representation, limit theorems and cosmological applications.
\endbibitem

\bibitem{NeufViens}
\begin{bunpublished}[auto:parserefs-M02]
\bauthor{\bsnm{Neufcourt},~\bfnm{L.}\binits{L.}} \AND
\bauthor{\bsnm{Viens},~\bfnm{F.~G.}\binits{F.~G.}}
(\byear{2014}).
\btitle{A third moment theorem and precise asymptotics for variations of stationary Gaussian sequences}.
\bnote{Preprint}.
\end{bunpublished}
\endbibitem

\bibitem{NourNu}
\begin{barticle}[mr]
	\bauthor{\bsnm{Nourdin},~\bfnm{Ivan}\binits{I.}},
	\bauthor{\bsnm{Nualart},~\bfnm{David}\binits{D.}} \AND
	\bauthor{\bsnm{Poly},~\bfnm{Guillaume}\binits{G.}}
	(\byear{2013}).
	\btitle{Absolute continuity and convergence of densities for random vectors on Wiener chaos}.
	\bjournal{Electron. J. Probab}
	\bvolume{18}
	\bpages{1--19}.
	\bid{mr={3035750}}
\end{barticle}
\endbibitem

\bibitem{NourPec1}
\begin{barticle}[mr]
\bauthor{\bsnm{Nourdin},~\bfnm{Ivan}\binits{I.}} \AND
\bauthor{\bsnm{Peccati},~\bfnm{Giovanni}\binits{G.}}
(\byear{2009}).
\btitle{Stein's method on {W}iener chaos}.
\bjournal{Probab. Theory Related Fields}
\bvolume{145}
\bpages{75--118}.
\bid{doi={10.1007/s00440-008-0162-x}, issn={0178-8051}, mr={2520122}}
\end{barticle}
% NOT OUTPUTTED:
%   number = 1-2
%   doi = http://dx.doi.org/10.1007/s00440-008-0162-x
%   coden = PTRFEU
%   fjournal = Probability Theory and Related Fields
\endbibitem

\bibitem{NourPec2}
\begin{barticle}[mr]
\bauthor{\bsnm{Nourdin},~\bfnm{Ivan}\binits{I.}} \AND
\bauthor{\bsnm{Peccati},~\bfnm{Giovanni}\binits{G.}}
(\byear{2009}).
\btitle{Stein's method and exact {B}erry-{E}ss\'{e}en asymptotics for functionals of {G}aussian fields}.
\bjournal{Ann. Probab.}
\bvolume{37}
\bpages{2231--2261}.
\bid{doi={10.1214/09-AOP461}, issn={0091-1798}, mr={2573557}}
\end{barticle}
% NOT OUTPUTTED:
%   number = 6
%   doi = http://dx.doi.org/10.1214/09-AOP461
%   coden = APBYAE
%   fjournal = The Annals of Probability
\endbibitem

\bibitem{NourPec3}
\begin{barticle}[mr]
\bauthor{\bsnm{Nourdin},~\bfnm{Ivan}\binits{I.}},
\bauthor{\bsnm{Peccati},~\bfnm{Giovanni}\binits{G.}} \AND
\bauthor{\bsnm{Podolskij},~\bfnm{Mark}\binits{M.}}
(\byear{2011}).
\btitle{Quantitative {B}reuer-{M}ajor theorems}.
\bjournal{Stochastic Process. Appl.}
\bvolume{121}
\bpages{793--812}.
\bid{doi={10.1016/j.spa.2010.12.006}, issn={0304-4149}, mr={2770907}}
\end{barticle}
% NOT OUTPUTTED:
%   number = 4
%   doi = http://dx.doi.org/10.1016/j.spa.2010.12.006
%   coden = STOPB7
%   fjournal = Stochastic Processes and their Applications
\endbibitem

\bibitem{NourPol}
\begin{barticle}[mr]
	\bauthor{\bsnm{Nourdin},~\bfnm{Ivan}\binits{I.}},
	\bauthor{\bsnm{Poly},~\bfnm{Guillaume}\binits{G.}}
	(\byear{2013}).
	\btitle{Convergence in total variation on Wiener chaos}.
	\bjournal{Stochastic Process. Appl.}
	\bvolume{123}
	\bpages{651--674}.
	\bid{mr={3003367}}
\end{barticle}
\endbibitem

\bibitem{ole}
\begin{barticle}[mr]
	\bauthor{\bsnm{Olenko},~\bfnm{Andriy}\binits{A.}}
	(\byear{2006}).
	\btitle{A Tauberian theorem for fields with the OR spectrum. I}.
	\bjournal{Theory Probab. Math. Statist.}
	\bvolume{73}
	\bpages{135--149}.
	\bid{doi = {10.1090/S0094-9000-07-00688-6}, issn={0868-6904}, mr={2213848}}
\end{barticle}
% NOT OUTPUTTED:
%   doi = http://dx.doi.org/10.1090/S0094-9000-07-00688-6
%   fjournal = Teor\=\i ya \u Imov\=\i rnoste\u\i \ ta Matematichna Statistika. Ki\"\i vs$\prime$ki\u\i \ Un\=\i versitet \=\i meni Tarasa Shevchenka
\endbibitem

\bibitem{pec}
\begin{bbook}[mr]
\bauthor{\bsnm{Peccati},~\bfnm{Giovanni}\binits{G.}} \AND
\bauthor{\bsnm{Taqqu},~\bfnm{Murad~S.}\binits{M.~S.}}
(\byear{2011}).
\btitle{Wiener Chaos: Moments, Cumulants and Diagrams}.
\bseries{Bocconi \& Springer Series}
\bvolume{1}.
\bpublisher{Springer, Milan; Bocconi Univ. Press},
\blocation{Milan}.
\bid{doi={10.1007/978-88-470-1679-8}, mr={2791919}}
\end{bbook}
% NOT OUTPUTTED:
%   doi = http://dx.doi.org/10.1007/978-88-470-1679-8
%   isbn = 978-88-470-1678-1
%   fpage = xiv+274
%   A survey with computer implementation, Supplementary material available online.
\endbibitem

\bibitem{pet}
\begin{bbook}[mr]
\bauthor{\bsnm{Petrov},~\bfnm{Valentin~V.}\binits{V.~V.}}
(\byear{1995}).
\btitle{Limit Theorems of Probability Theory}.
\bseries{Oxford Studies in Probability}
\bvolume{4}.
\bpublisher{The Clarendon Press, Oxford Univ. Press},
\blocation{New York}.
\bid{mr={1353441}}
\end{bbook}
% NOT OUTPUTTED:
%   isbn = 0-19-853499-X
%   fpage = xii+292
%   Sequences of independent random variables, Oxford Science Publications.
\endbibitem

\bibitem{Pham}
\begin{barticle}[mr]
\bauthor{\bsnm{Pham},~\bfnm{Viet-Hung}\binits{V.-H.}}
(\byear{2013}).
\btitle{On the rate of convergence for central limit theorems of sojourn times of {G}aussian fields}.
\bjournal{Stochastic Process. Appl.}
\bvolume{123}
\bpages{2158--2174}.
\bid{doi={10.1016/j.spa.2013.01.016}, issn={0304-4149}, mr={3038501}}
\end{barticle}
% NOT OUTPUTTED:
%   number = 6
%   doi = http://dx.doi.org/10.1016/j.spa.2013.01.016
%   fjournal = Stochastic Processes and their Applications
\endbibitem

\bibitem{Shi}
\begin{barticle}[mr]
\bauthor{\bsnm{Shigekawa},~\bfnm{Ichiro}\binits{I.}}
(\byear{1980}).
\btitle{Derivatives of {W}iener functionals and absolute continuity of induced measures}.
\bjournal{J. Math. Kyoto Univ.}
\bvolume{20}
\bpages{263--289}.
\bid{issn={0023-608X}, mr={0582167}}
\end{barticle}
% NOT OUTPUTTED:
%   number = 2
%   coden = JMKYAZ
%   fjournal = Journal of Mathematics of Kyoto University
\endbibitem

\bibitem{Taq1}
\begin{barticle}[mr]
\bauthor{\bsnm{Taqqu},~\bfnm{Murad~S.}\binits{M.~S.}}
(\byear{1974/75}).
\btitle{Weak convergence to fractional {B}rownian motion and to the {R}osenblatt process}.
\bjournal{Z. Wahrsch. Verw. Gebiete}
\bvolume{31}
\bpages{287--302}.
\bid{mr={0400329}}
\end{barticle}
\endbibitem

\bibitem{Taq2}
\begin{barticle}[mr]
\bauthor{\bsnm{Taqqu},~\bfnm{Murad~S.}\binits{M.~S.}}
(\byear{1979}).
\btitle{Convergence of integrated processes of arbitrary {H}ermite rank}.
\bjournal{Z. Wahrsch. Verw. Gebiete}
\bvolume{50}
\bpages{53--83}.
\bid{doi={10.1007/BF00535674}, issn={0044-3719}, mr={0550123}}
\end{barticle}
% NOT OUTPUTTED:
%   number = 1
%   doi = http://dx.doi.org/10.1007/BF00535674
%   fjournal = Zeitschrift f\"ur Wahrscheinlichkeitstheorie und Verwandte Gebiete
\endbibitem


\bibitem{Vei}
\begin{barticle}[mr]
\bauthor{\bsnm{Veillette},~\bfnm{Mark~S.}\binits{M.~S.}} \AND
\bauthor{\bsnm{Taqqu},~\bfnm{Murad~S.}\binits{M.~S.}}
(\byear{2013}).
\btitle{Properties and numerical evaluation of the Rosenblatt
   distribution}.
\bjournal{Bernoulli}
\bvolume{19}
\bpages{982--1005}.
\bid{doi={10.3150/12-BEJ421}, issn={1350-7265}, mr={3079303}}
\end{barticle}
\endbibitem

\bibitem{NourWeb}
\begin{bmisc}[auto:parserefs-M02]
\btitle{Malliavin-Stein approach, a collection of research papers available online at I. Nourdin's website \url{https://sites.google.com/site/malliavinstein/home}}.
\bnote{Retrieved on 16 February 2016}.
\end{bmisc}
\endbibitem



\end{thebibliography}
\end{document}